\newtheorem{theorem}{Theorem}[section]
\newtheorem{lemma}[theorem]{Lemma}
\newtheorem{proposition}[theorem]{Proposition}
\newtheorem{definition}[theorem]{Definition}
\theoremstyle{remark}
\newtheorem{remark}[theorem]{Remark}
\newtheorem{example}[theorem]{Example}
\renewcommand{\leq}{\leqslant}
\renewcommand{\geq}{\geqslant}
\newcommand{\ptl}{\partial}
\newcommand{\ga}{\gamma}
\newcommand{\wtilde}{\widetilde}
\newcommand{\nn}{\mathbb{N}}
\DeclareMathOperator{\area}{area}
\DeclareMathOperator{\inte}{int}
\numberwithin{equation}{section}
\begin{document}

\title[Minimizing trisections for the maximum relative diameter]
{Trisections of a 3-rotationally symmetric planar convex body minimizing
the maximum relative diameter}

\author[A. Ca\~nete]{Antonio Ca\~nete}
\address{Departamento de Matem\'atica Aplicada I \\ Universidad de Sevilla}
\email{antonioc@us.es}

\author[C. Miori]{Cinzia Miori}
\address{Departamento de An\'alisis Matem\'atico \\ Universidad de Alicante}
\email{cm4@alu.ua.es}

\author[S. Segura Gomis]{Salvador Segura Gomis}
\address{Departamento de An\'alisis Matem\'atico \\ Universidad de Alicante}
\email{salvador.segura@ua.es}

\thanks{The first author is partially supported by MCyT research project MTM2010-21206-C02-01.
The second author is partially supported by MINECO (Ministerio de Econom\'{\i}a y Competitividad) and FEDER (Fondo Europeo de Desarrollo Regional) project MTM2012-34037 and Fundaci\'{o}n S\'{e}neca project 04540/GERM/06, Spain. This research is a result of the activity developed within the framework of the Programme in Support of Excellence Groups of the Regi\'{o}n de Murcia, Spain, by Fundaci\'{o}n S\'{e}neca, Regional Agency for Science and Technology (Regional Plan for Science and Technology 2007-2010).}
\subjclass[2010]{52A10, 52A40.}
\keywords{fencing problems, 3-rotationally symmetric convex body, maximum relative diameter}
\date{\today}

\begin{abstract}
In this work we study the fencing problem consisting of finding a
trisection of a 3-rotationally symmetric planar convex body
which minimizes the maximum relative diameter. We prove that an optimal solution
is given by the so-called standard trisection.
We also determine the optimal set giving the minimum value for this functional
and study the corresponding universal lower bound.
\end{abstract}

\maketitle

\section*{Introduction}

In the setting of convex geometry, it is considered historically that
the classical geometric magnitudes associated to a planar compact convex set are
the perimeter, the area, the diameter, the minimum width, the circumradius and the inradius.
These magnitudes have been deeply studied, and along the last century,
the different relations between them, which can be usually established by means of inequalities,
have become a very interesting topic, as well as the characterization of the optimal 
sets satisfying the equality in such inequalities.
Possibly, the most remarkable example of this kind of problems is the classical isoperimetric inequality,
relating the area and the perimeter of a set, see \cite{O}.
We refer the reader to \cite{sa} and references therein for a detailed description
of the known relations involving two and three of the previous magnitudes for general planar convex sets.

In addition, the study of these classical functionals has inspired a large amount
of related questions and open problems, see \cite{cfg}.
Among them, we have the so-called \emph{fencing problems},
which consist of finding the way of partitioning a planar compact convex set into $n$ connected subsets of equal areas,
minimizing (or maximizing) a given geometric measure.
For these questions, different approaches can be explored: one can try to characterize the optimal division,
or find out some geometric properties of the solution,
or even compute estimates for the optimal value or bounds of the considered measure.

The most common sample of fencing problems is the \emph{relative isoperimetric problem},
consisting of minimizing the relative perimeter of the partition of the set
(or equivalently, the length of the partitioning curves).
For some regular polygons, including the circle, this relative isoperimetric problem
has been treated in several works \cite{tomonaga,bleicher,cr,cox}, not only for subdivisions of equal areas,
providing general properties of the minimal partition and characterizing the solution when the number of subsets is small.
On the other hand, for partitions into two connected subsets of equal areas, which are called bisections,
a recent result gives a sharp upper bound for the perimeter of the shortest bisection for planar convex sets,
which is attained by the circle \cite[Th.~1.1]{efknt}. This optimal bound is expressed in terms of the area of the set.
Some others lower and upper bounds for the minimal length of bisections
in terms of the classical magnitudes, and the corresponding characterizations of some optimal sets, can be found in \cite{gkms}, as well as the study of the same questions regarding the maximal length of bisections.

Apart from the relative perimeter of the partition,
fencing problems can be considered with others relative geometric magnitudes to be minimized or maximized.
For instance, we can define the \emph{maximum relative diameter} as follows:
given a planar compact (convex) set $C$ and a partition $\mathcal{P}$ of $C$ into
connected subsets $\{C_1,\dots,C_n\}$ of equal areas, the maximum relative diameter of $\mathcal{P}$ is
\[
d_M(\mathcal{P})=\max\{D(C_i):i=1,\dots,n\},
\]
where $D(C_j)=\max\{d(x,y):x,y\in C_j\}$ denotes the diameter of the subset $C_j$.
In other words, this magnitude measures the largest distance in any of the subsets of the partition.
This functional and the corresponding associated fencing problem
have been already studied in \cite{mps} (see also \cite{cms}).
This work focuses on bisections of centrally symmetric planar convex bodies,
and proves that for any set of this family, the minimum value for $d_M$ is attained
when the bisection consists of a straight line passing through the center of the set \cite[Prop.~4]{mps}.
We remark that the precise characterization of the optimal straight line is not known,
and that the minimizing bisection is not necessarily unique.
Moreover, it is also determined the centrally symmetric planar convex body of unit area with
the minimum possible value for the maximum relative diameter $d_M$,
that is, the optimal set for this problem \cite[Th.~5]{mps}.

The results in \cite{mps} are obtained in the class of centrally symmetric convex bodies,
which a priori may seem a restrictive hypothesis.
However, this class of sets is the suitable domain in the setting of bisections,
arising naturally in this framework:
when considering simple bisections by line segments,
we have that the set attaining the minimum possible value
for the maximum relative diameter is necessarily a centrally symmetric body,
and not only in the planar case \cite[Th.~7]{mps}.
Moreover, this class of sets had previously appeared for
some classical geometric questions.
Regarding the maximal length of the shortest bisecting chord,
Santal\'o conjectured if for any planar convex set $K$, an upper bound was given by $(4\pi)^{1/2}\,\area(K)$,
with equality for a disk \cite[Problem~A26]{cfg}. Although this conjecture is not true in general
(counterexamples were obtained by Auerbach \cite{auer}, see also \cite{fp}) and the complete answer has been recently found \cite[Th.~1.2]{efknt},
the conjecture holds when restricting to centrally symmetric convex sets \cite[Th.~4]{ci}.



Continuing in the direction explored in \cite{mps}, in this paper we shall focus on \emph{trisections}
of planar convex bodies, studying the minimum value for the maximum relative diameter.
Trisections are partitions into three connected subsets of equal areas by means of three curves meeting
in a common interior point, and therefore, the convenient setting for this problem is the
class of 3-rotationally symmetric planar convex bodies.
For any set in this class, we shall characterize a trisection that minimizes the
maximum relative diameter $d_M$ (Theorem~\ref{th:main}), describing its explicit construction in Section~\ref{se:tb}.
This particular minimizing trisection will be referred to as \emph{standard trisection}.
We shall also see that uniqueness of the solution is not expected,
since some small perturbations of a minimizing trisection will give the same value for $d_M$.
In addition, we shall determine which is the 3-rotationally symmetric planar convex body
attaining the minimum value for the maximum relative diameter (Theorem~\ref{th:optimo}).
This optimal set consists of a certain intersection of an equilateral triangle and a circle,
and has already arisen in several optimization problems involving some triplets of classical geometric magnitudes, see~\cite[\S.~4]{hss}.
We stress that, along this paper, we are considering trisections by general curves, not only by straight line segments,
and that the sets of our family are convex bodies (that is, compact convex sets),
in order to guarantee the existence of the maximum relative diameter.

The restriction of our fencing problem to the family of 3-rotationally symmetric planar convex bodies
is natural in this setting since we are dealing with trisections.
In fact, this family will play the same role as the class of centrally symmetric bodies for bisections.
Apart from this, the family of 3-rotationally symmetric planar convex bodies
is geometrically interesting for different reasons:
many geometric transformations preserve the existing threefold symmetry,
and it contains the optimal solution for many problems in the convex geometry setting
(for instance, Pal proved that among all planar convex sets with fixed minimal width,
the equilateral triangle is the set with minimum area enclosed~\cite{pa}).
Moreover, many boundary curves in Santal\'o's diagrams for complete systems of inequalities represent 3-rotationally symmetric convex bodies~\cite{hss}.



We have organized this paper as follows. Section \ref{se:preliminaries} contains the precise definitions and the
statement of the problem, and Section~\ref{se:tb} describes the construction of the so-called standard trisection for any
3-rotationally symmetric planar convex body. This construction will be completely determined by the
smallest equilateral triangle containing the considered set.
Section~\ref{se:general} contains the proof of our main Theorem~\ref{th:main}:
\begin{quotation}
\emph{For any 3-rotationally symmetric planar convex body,
a trisection minimizing the maximum relative diameter $d_M$
is given by the standard trisection.}
\end{quotation}

\noindent The key results for proving our main Theorem~\ref{th:main} are Proposition~\ref{prop:borde},
which indicates the pairs of points that may realize the maximum relative diameter for the standard trisection,
and Lemmata~\ref{le:primero} and \ref{le:segundo}, where we show that the maximum relative diameter for any
trisection is always greater than or equal to such value, then yielding that the standard trisection provides a desired minimum.
We also give some examples illustrating that the uniqueness of the solution for this problem does not hold.

In Section~\ref{se:min} we investigate a related problem in this setting,
searching for the 3-rotationally symmetric planar convex body of fixed area
with the minimum value for the maximum relative perimeter
(equivalently, the minimum possible value for the maximum relative diameter in that class of sets).
In view of Theorem~\ref{th:main}, we only have to focus on the standard trisections,
and after analyzing this question in some particular cases, we obtain our Theorem~\ref{th:optimo}:
\begin{quotation}
\emph{The minimum value for the maximum relative perimeter in the class of 3-rotationally symmetric planar convex bodies
of unit area is uniquely attained by the standard trisection of the set $\wtilde{H}$ from Figure~\ref{fig:optimo}.}
\end{quotation}
\begin{figure}[ht]
   \includegraphics[width=0.23\textwidth]{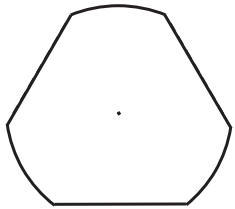}\\
  \caption{Optimal set providing the minimum value for the maximum relative diameter}
  \label{fig:optimo}
\end{figure}
This set $\wtilde{H}$ is geometrically determined by an appropriate intersection of an equilateral triangle and a circle,
and provides the universal sharp lower bound for the maximum relative diameter functional (Remark~\ref{re:quotient}).

Finally, in Section~\ref{se:final} we treat the same problem
when considering partitions into three connected subsets of equal areas which are not trisections.
Proposition~\ref{prop:general} asserts that, even in this more general case, the standard trisection
is a minimizing one for the maximum relative diameter.

We remark that Section~\ref{se:min} has been written constructively,
reproducing in some sense the process followed in order to obtain the results,
with the analysis of some particular examples leading to more general conclusions.
We think this can be interesting for the reader, since it may help to a better comprehension of this work.


\section{Preliminaries}
\label{se:preliminaries}

Given a planar convex set $C$, and $n\in\nn$, we say that $C$ has \emph{$n$-fold rotational symmetry}
(or that it is \emph{$n$-rotationally symmetric}) with respect to a center $p\in C$
if $C$ is invariant under the action of rotation of angle $\displaystyle{360/n}$ degrees about $p$.
In particular, for $n=3$ this implies that 3-rotationally symmetric sets
are invariant under any term of rotations about $p$ with angles of $120$, $240$ and $360$ degrees.
Consequently, any three equiangular axes centered at $p$ will divide $C$ into three identical subsets, up to rotations.
Some examples of sets of this family are the regular polygons of $3n$ edges (with $n\in\nn$) and
Reuleaux triangle.


\begin{figure}[h]
\centering{
\subfigure{\includegraphics[width=0.16\textwidth]{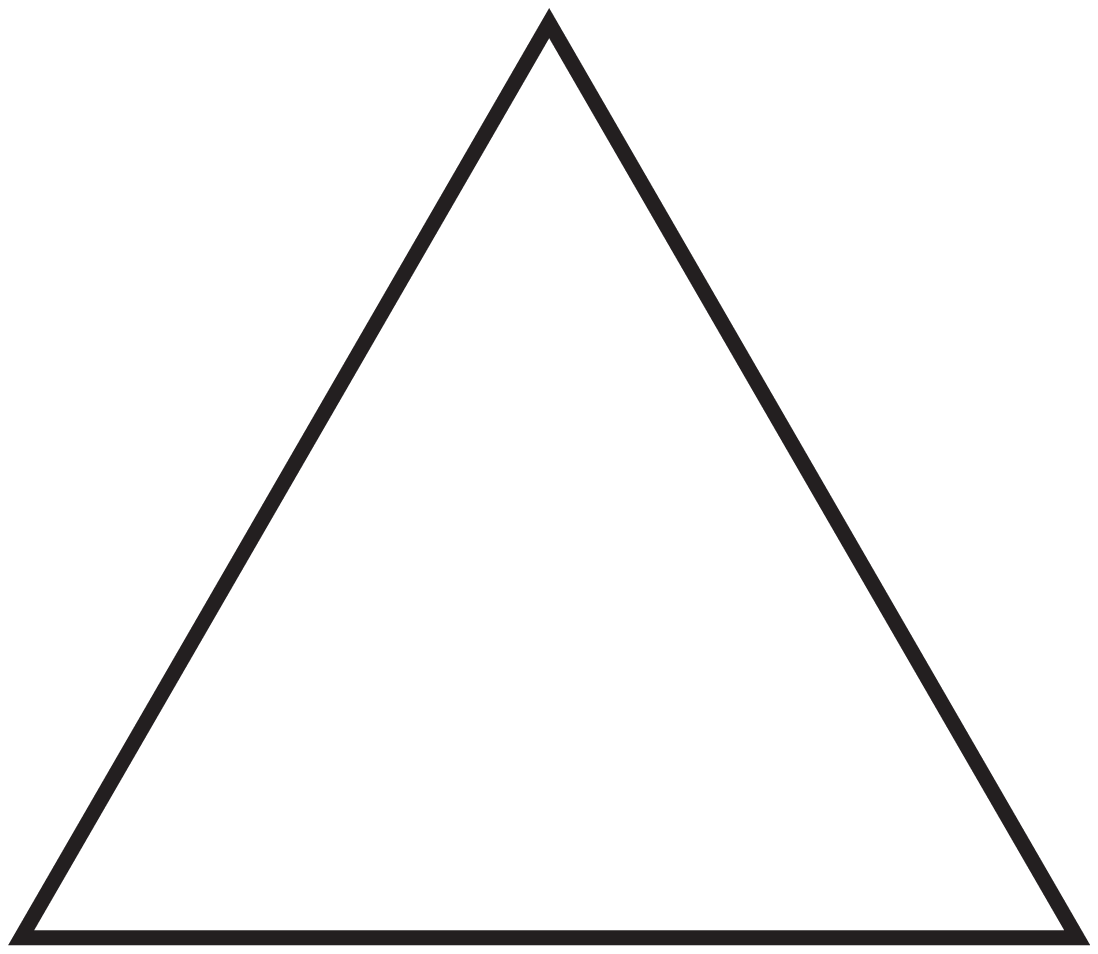}}
\hspace{0.07\textwidth}
\subfigure{\includegraphics[width=0.16\textwidth]{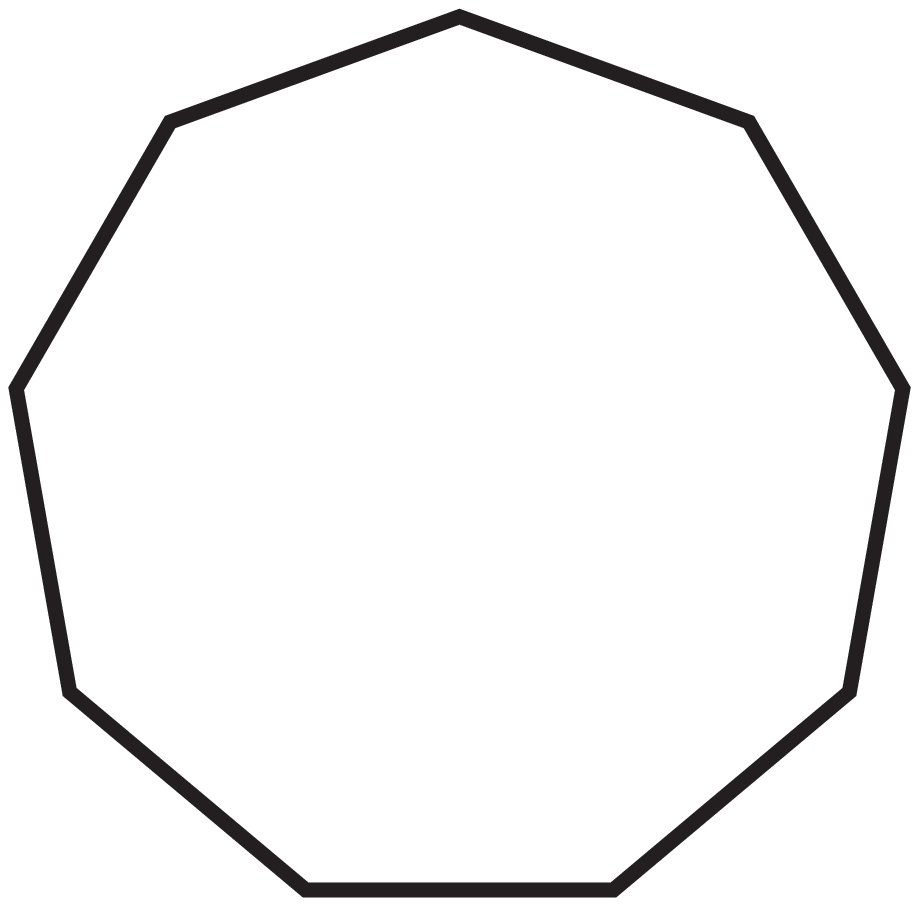}}
\hspace{0.07\textwidth}
\subfigure{\includegraphics[width=0.16\textwidth]{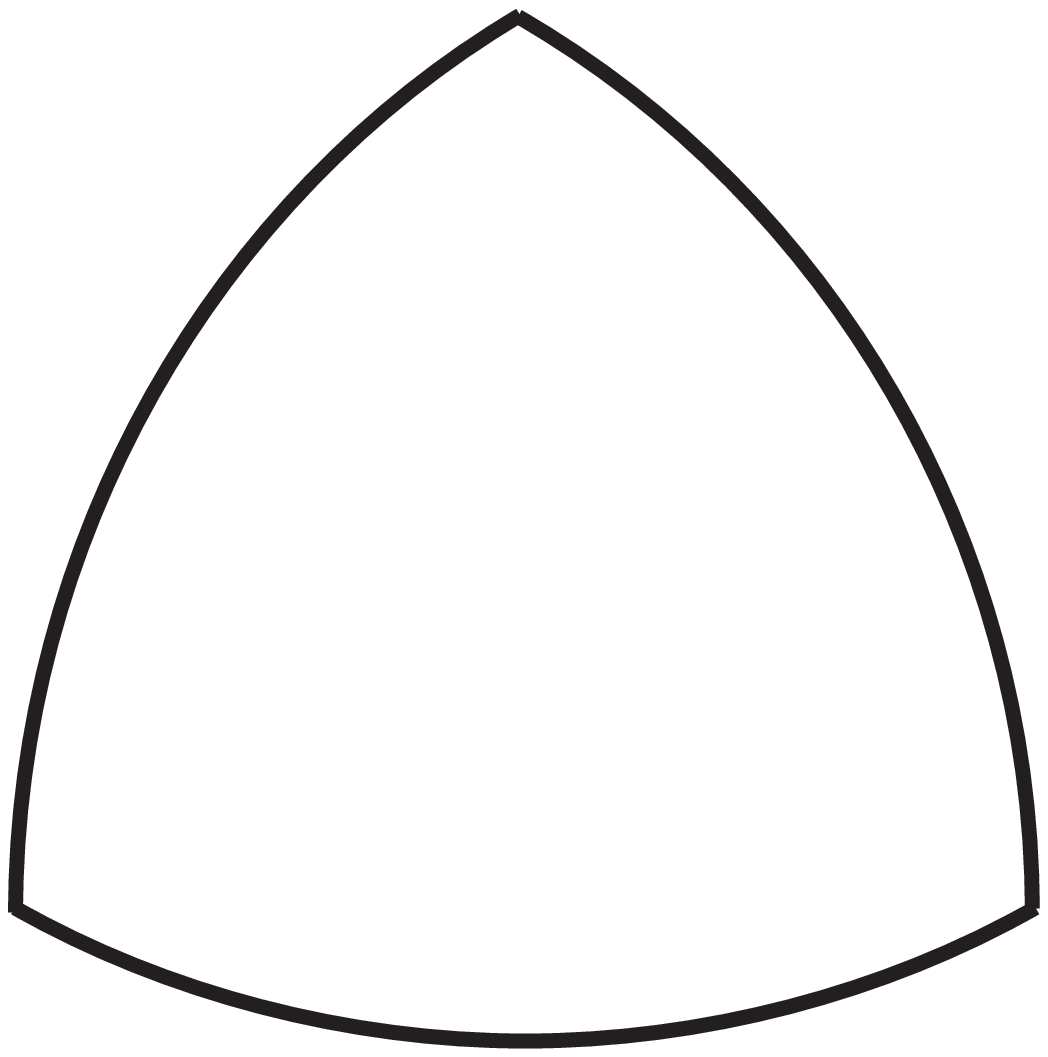}}
\caption{Some examples of 3-rotationally symmetric sets: equilateral triangle, regular enneagon, and Reuleaux triangle}
}
\end{figure}

In this work, we shall focus on the class of 3-rotationally symmetric planar convex bodies, thus assuming that
they are compact sets.
For a given set $C$ belonging to this class, we shall denote by $T_r(C)$
the smallest equilateral triangle containing $C$.
This triangle will be useful along this work, and its construction can be done as follows:
consider $m\in\ptl C$ such that $d(p,m)=d(p,\ptl C)$, where $d$ represents the Euclidean distance in the plane.
Equivalently,  $m$ is a point in $\ptl C$ achieving the minimum distance to $p$.
Consider then the line orthogonal to the segment $\overline{pm}$ passing through $m$,
and the other two lines determined by the existing threefold symmetry of $C$, see Figure~\ref{fig:tr}.
The resulting equilateral triangle is necessarily $T_r(C)$,
due to the convexity of $C$ and the fact that the apothem of that triangle is minimal by the choice of the point $m$.


\begin{figure}[ht]
  \includegraphics[width=0.25\textwidth]{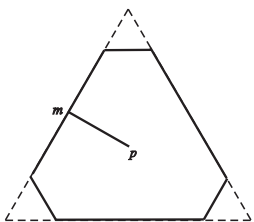}\\
  \caption{Construction of the smallest equilateral triangle
  containing a 3-rotationally symmetric hexagon}
  \label{fig:tr}
\end{figure}


\begin{remark}
\label{re:ball}
Let $C$ be a 3-rotationally symmetric planar convex body $C$, and let $T_r(C)$ be its associated smallest
equilateral triangle. We shall denote by $B(C)$ the inscribed ball of $T_r(C)$,
whose center coincides with the center of the threefold symmetry of $C$,
and its radius is equal to the length of the apothem of $T_r(C)$. 
Notice that there are no points from $\ptl C$ in the interior of $B(C)$, and $B(C)\subseteq C\subseteq T_r(C)$,
see Figure~\ref{fig:ball}.
\end{remark}

\begin{figure}[ht]
  \includegraphics[width=0.25\textwidth]{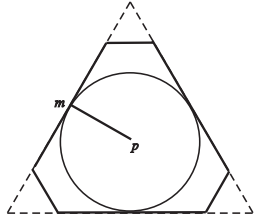}\\
  \caption{Inscribed ball associated to a 3-rotationally symmetric hexagon}
  \label{fig:ball}
\end{figure}

We now define the suitable partitions into three subsets that will be considered for the class of
3-rotationally symmetric planar convex bodies, although the definition can be done in a more general setting.

\begin{definition}
Let $C$ be a planar compact set.
A trisection $T$ of $C$ is a decomposition $\{C_1,C_2,C_3\}$ of $C$ by planar curves $\{\ga_{12}, \ga_{23}, \ga_{13}\}$ satisfying:
\begin{itemize}
\item[i)] $C=C_1\cup C_2 \cup C_3$.
\item[ii)] $\inte(C_i)\cap\inte(C_j)=\emptyset$, for $i,j\in\{1,2,3\}$, $i\neq j$.
\item[iii)] $\area(C_i)=\frac{1}{3}\area(C)$, for $i\in\{1,2,3\}$.
\item[iv)] $C_i$ is a connected set, and $\gamma_{ij}=\ptl C_i\cap\ptl C_j$, for $i,j\in\{1,2,3\}$.  
\item[v)] The curves $\ga_{12}, \ga_{23}, \ga_{13}$ meet at a point $c\in\inte(C)$,
and the other endpoints of the curves lie in $\ptl C$.
\end{itemize}
The endpoints of the curves $\{\ga_{12}, \ga_{23}, \ga_{13}\}$
meeting $\ptl C$ will be simply referred to as the endpoints of the trisection,
and the point $c\in\inte(C)$ will be called the common point of the trisection.
\end{definition}
\noindent In other words, a trisection is a partition of the original set $C$ into three connected subsets of equal areas
by three curves starting from an interior common point and ending on different points of the boundary of $C$.
It is clear that there is an infinite number of trisections for any 3-rotationally symmetric planar convex body,
since the curves are not necessarily segments and the common point $c$ does not have to coincide with the center $p$ of symmetry of the set,
as depicted in Figure~\ref{fig:tri-hex}.

\begin{figure}[ht]
  \includegraphics[width=0.77\textwidth]{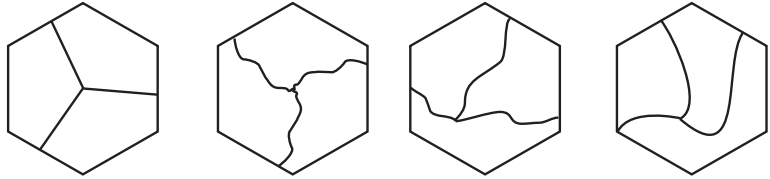}\\
  \caption{Some different trisections for a regular hexagon}
  \label{fig:tri-hex}
\end{figure}

In this work we shall consider a geometric functional defined in terms of the diameter of a set,
which is called \emph{maximum relative diameter}, which is defined as follows.

\begin{definition}
Let $C$ be a planar compact set,
and let $T$ be a trisection of $C$ into subsets $C_1,C_2,C_3$.
We define the maximum relative diameter of the trisection $T$ as
\[
d_M(T,C)=\max\{D(C_1),D(C_2),D(C_3)\},
\]
where $D(C_i)=\max\{d(x,y):x,y\in C_i\}$ denotes the diameter of $C_i$.
\end{definition}

\begin{remark}
For simplicity, we shall usually write $d_M(T)$ instead of $d_M(T,C)$, if no confusion may appear.
\end{remark}

\begin{remark}
We point out that for a given trisection of a set, the maximum relative diameter represents the
maximum distance between two points in any of the subsets of the trisection.
The existence of this value is clear due to the continuity of the Euclidean distance and
the compactness of the original set. 
\end{remark}

\begin{remark}
\label{le:minimopoligono}
We recall that the diameter of a convex polygon is attained by the distance between two of its vertices.
\end{remark}

Following lemma shows that the maximum relative diameter of a trisection is always achie\-ved
by points lying in the boundary of one of the subsets determined by the trisection.

\begin{lemma}
\label{le:borde}
Let $C$ be a planar compact set, and $T$ a trisection of $C$.
Then, the maximum relative diameter $d_M(T)$ is attained by the distance between two points belonging
to the boundary of one of the subsets given by $T$.
\end{lemma}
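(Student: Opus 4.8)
The plan is to reduce the claim to the convex case and then invoke the elementary fact that the diameter of a compact set equals the diameter of its convex hull. First I would fix a trisection $T=\{C_1,C_2,C_3\}$ and suppose, without loss of generality, that $d_M(T)=D(C_1)$; let $x,y\in C_1$ be a pair of points realizing this diameter, so that $d(x,y)=D(C_1)$. If both $x$ and $y$ already lie on $\ptl C_1$ we are done, so the remaining task is to show we may always move the realizing pair to the boundary of $C_1$ without decreasing the distance.

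The key observation is that the segment $\overline{xy}$ has maximal length among all chords of $C_1$, and a standard convexity argument shows that the endpoints of a longest chord of a compact set can be taken on the boundary of its convex hull. More precisely, I would argue as follows: replace $C_1$ by its convex hull $\mathrm{conv}(C_1)$, which has the same diameter (the farthest-point distance is unchanged under taking convex hulls, since any point of the hull is a convex combination of points of $C_1$ and the distance function $z\mapsto d(z,w)$ is convex, so its maximum over $\mathrm{conv}(C_1)$ is attained at an extreme point, hence in $C_1$). Thus the diameter of $C_1$ is realized by two points $x',y'\in C_1$ that are extreme points of $\mathrm{conv}(C_1)$; such extreme points necessarily lie in $\ptl C_1$. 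If $x'$ or $y'$ were an interior point of $C_1$, one could move it slightly in the direction away from the other point while staying inside $C_1$, strictly increasing the distance and contradicting maximality; hence both realizing points lie on $\ptl C_1$.

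I would then combine this with Remark~\ref{le:minimopoligono} in the polygonal situation only if needed, but in fact the convexity argument above already handles general compact $C_1$ without assuming it is a polygon, so no appeal to that remark is required. Writing it out: let $x,y\in C_1$ with $d(x,y)=D(C_1)=d_M(T)$; if, say, $x\in\inte(C_1)$, pick a small ball $B(x,\eps)\subseteq C_1$ and note that the point $x+\eps\,\tfrac{x-y}{|x-y|}$ lies in this ball and is at distance $d(x,y)+\eps$ from $y$, contradicting that $d(x,y)$ is the maximum. Hence $x\in\ptl C_1$, and symmetrically $y\in\ptl C_1$, so $d_M(T)$ is attained by two points of $\ptl C_1$, one of the subsets given by $T$.

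The main obstacle, such as it is, is purely bookkeeping: one must be careful that the perturbed point $x+\eps\,\tfrac{x-y}{|x-y|}$ actually stays in $C_1$, which is immediate once $x$ is assumed interior, and one must note that the case $x=y$ cannot occur unless $C_1$ is a single point (in which case the statement is vacuous since then the realizing points coincide and lie trivially on the boundary). There is no serious difficulty here; the lemma is essentially a restatement of the fact that diameters of compact sets are realized on the boundary of the convex hull, specialized to the pieces of a trisection, and recorded separately only because it will be used repeatedly in the sequel to restrict attention to boundary points when estimating $d_M$.
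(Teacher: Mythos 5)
Your proof is correct and, once the convex-hull preamble is stripped away, the argument you actually write out (perturb an interior realizing point $x$ in the direction away from $y$ inside a small ball contained in $\inte(C_1)$, contradicting maximality) is exactly the paper's proof. The convex-hull reduction is superfluous here, since the perturbation step alone already establishes the claim.
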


\begin{proof}
Let $C_1,C_2,C_3$ be the subsets of $C$ given by the trisection $T$, and assume that $d_M(T)=d(a,b)$,
with $a,b\in C_1$. If $a\in\inte(C_1)$, we can find a small ball $B_a$ centered at $a$ and completely contained in $C_1$,
since $\inte(C_1)$ is an open set.
Then it is clear that there are points in $B_a$ whose distance from $b$ will be strictly larger than $d(a,b)$,
which contradicts the fact that $d_M(T)=d(a,b)$.
\end{proof}

As pointed out in the Introduction, the purpose of this paper is the following:
among all the trisections for a given 3-rotationally symmetric planar convex body,
which is the trisection providing the minimum possible value for the maximum relative diameter?
We shall prove that the so-called \emph{standard trisection}, described in Section~\ref{se:tb},
is an answer to this question.

\section{Standard trisection $T_b$}
\label{se:tb}

We now proceed to the description of the construction of one particular trisection for the sets of our family, 
called the \emph{standard trisection}.

Let $C$ be a 3-rotationally symmetric planar convex body,
and consider $T_r(C)$ its associated smallest equilateral triangle.
By joining the midpoints of the edges of $T_r(C)$ with the center $p$ of the threefold symmetry of $C$ by line segments,
we shall clearly obtain a trisection of $C$ yielding identical subsets $C_1, C_2, C_3$ up to rotation, due to the existing symmetry.
This trisection will be called \emph{standard trisection} of $C$, and will be denoted by $T_b(C)$
(or simply $T_b$ if there is no confusion with the considered set).
Notice that the standard trisection of $C$ is uniquely determined by means of the equilateral triangle $T_r(C)$.
Along this paper, we shall also denote by $\{v_1,v_2,v_3\}$ the endpoints of the standard trisection,
which coincide with the midpoints of the edges of the associated equilateral triangle.

\begin{figure}[ht]
  \includegraphics[width=0.9\textwidth]{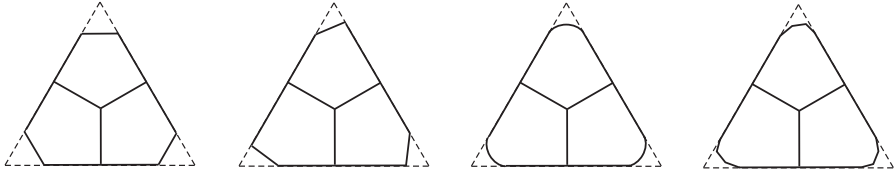}\\
  \caption{Standard trisections for different 3-rotationally symmetric sets}
\end{figure}




Next Proposition~\ref{prop:minimodm} indicates which pair of points realizes $d_M(T_b)$ for any 3-rotationally symmetric convex \emph{polygon}. 

\begin{proposition}
\label{prop:minimodm}
Let $C$ be a 3-rotationally symmetric convex polygon, and $T_b$ the standard trisection of $C$. Then
\[
d_M(T_b)=d(v_1,v_2),\text{ or } d_M(T_b)=d(p,x),
\]
where $v_1$, $v_2$ are two endpoints of $T_b$, $p$ is the center of the threefold symmetry, and $x$
is a vertex of $C$.
\end{proposition}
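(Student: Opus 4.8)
The plan is to reduce the statement to the analysis of a single piece of the trisection, exploiting that this piece is a convex polygon. To begin with, by the threefold symmetry the three pieces $C_1,C_2,C_3$ determined by $T_b$ are congruent, so $d_M(T_b)=D(C_1)$. Then I would observe that $C_1=C\cap W$, where $W$ is the closed angular sector of amplitude $120$ degrees with vertex $p$ whose bounding rays pass through $v_1$ and $v_2$; as an intersection of two convex sets $C_1$ is convex, and, $C$ being a polygon, $C_1$ is a convex polygon. By Remark~\ref{le:minimopoligono} we then have $D(C_1)=d(P,Q)$ for two vertices $P,Q$ of $C_1$, and a short inspection of $\partial C_1=\overline{pv_1}\cup A\cup\overline{v_2p}$, where $A$ is the sub-arc of $\partial C$ joining $v_1$ to $v_2$ inside $W$, shows that every vertex of $C_1$ is either $p$, one of $v_1,v_2$, or a vertex of $C$ lying on $\overline{A}$.

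The geometric core of the proof is the claim that $\overline{A}$ is contained in the triangle $\Delta$ with vertices $v_1$, $v_2$ and $w$, where $w$ is the vertex of $T_r(C)$ at which the two sides of $T_r(C)$ with midpoints $v_1$ and $v_2$ meet. Since $\overline{wv_1}$ and $\overline{wv_2}$ are halves of these two sides, they meet at $w$ forming the angle $60$ degrees of the equilateral triangle $T_r(C)$; hence $\Delta$ is isosceles with apex angle $60$ degrees, therefore equilateral, and $D(\Delta)=d(v_1,v_2)$. To prove $\overline{A}\subseteq\Delta$, I would first show that no point of $\partial C\cap W$ lies strictly on the side of the line $v_1v_2$ that contains $p$: the part of $W$ on that open side equals the closed triangle $pv_1v_2$ with the side $\overline{v_1v_2}$ removed, and this set is contained in $\inte(C)$, since the open triangle is interior to $C$ (the closed triangle lies in $C$) and the half-open segments $\overline{pv_1}\setminus\{v_1\}$, $\overline{pv_2}\setminus\{v_2\}$, which emanate from the interior point $p$, lie in $\inte(C)$. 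Consequently $\overline{A}$ lies in the closed half-plane $H$ bounded by the line $v_1v_2$ and not containing $p$, so $\overline{A}\subseteq C\cap W\cap H\subseteq T_r(C)\cap W\cap H$. Finally $T_r(C)\cap W$ is the convex quadrilateral $pv_1wv_2$ (the bounding rays of $W$ reach $\partial T_r(C)$ exactly at $v_1$ and $v_2$, and the portion of $\partial T_r(C)$ between them passes through the single vertex $w$); intersecting it with $H$ discards the triangle $pv_1v_2$ and leaves exactly $\Delta$.

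With this at hand, the proof finishes by examining the pair $\{P,Q\}$ realizing $D(C_1)$. Suppose first that one of the two points, say $P$, equals $p$. If $Q$ is a vertex of $C$, then $d_M(T_b)=d(p,Q)$ is of the claimed form $d(p,x)$ with $x$ a vertex of $C$. If instead $Q=v_i$, then $d(p,v_i)$ is the apothem of $T_r(C)$, that is, $d(p,\partial C)$ (Remark~\ref{re:ball}), so every vertex $z$ of $C$ satisfies $d(p,z)\ge d(p,\partial C)=d(p,v_i)=D(C_1)=d_M(T_b)$, while $d(p,z)\le d_M(T_b)$ because $z$ belongs to one of the three pieces, each containing $p$; hence $d_M(T_b)=d(p,z)$ with $z$ a vertex of $C$. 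Suppose now that neither $P$ nor $Q$ equals $p$. Then $P,Q\in\overline{A}\subseteq\Delta$, so $d(P,Q)\le D(\Delta)=d(v_1,v_2)$; since $v_1,v_2\in C_1$ we also have $d(v_1,v_2)\le D(C_1)=d(P,Q)$, and therefore $d_M(T_b)=d(v_1,v_2)$.

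I expect the main obstacle to be the inclusion $\overline{A}\subseteq\Delta$, more precisely the verification that the boundary arc $A$ of $C_1$ cannot curve back across the chord $\overline{v_1v_2}$ towards $p$. Once this is settled, the remaining steps rely only on the elementary descriptions of $T_r(C)$ and its inscribed ball recalled in Section~\ref{se:preliminaries} and are routine.
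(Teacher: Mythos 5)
Your proposal is correct and follows essentially the same route as the paper: both reduce to the vertices of the convex polygon $C_1$ and use the equilateral triangle $\Delta$ with vertices $v_1,v_2,w$ (the paper's $\tau$), whose diameter is $d(v_1,v_2)$, to bound every distance not involving $p$. The only divergences are minor: you justify explicitly that the boundary arc lies beyond the chord $\overline{v_1v_2}$ (which the paper leaves implicit), and you handle the case $D(C_1)=d(p,v_i)$ by showing the conclusion still holds via $d(p,v_i)=d(p,\partial C)$, whereas the paper excludes that case by a perturbation argument; both are valid.
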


\begin{proof}
Let $C_1, C_2, C_3$ be the identical subsets of $C$ given by the standard trisection $T_b$,
and assume that $d_M(T_b)=D(C_1)$.
Since $C_1$ is clearly a polygon, we have that $D(C_1)=d(z_1,z_2)$, for $z_1, z_2$ vertices of $C_1$,
by using Remark~\ref{le:minimopoligono}.
By construction, the vertices of $C_1$ are the center $p$ of the threefold symmetry,
the two endpoints of $T_b$ contained in $C_1$, say $v_1$, $v_2$, and the vertices of $C$ contained in $C_1$.

We have that $D(C_1)\neq d(p,v_1)$, since there are points in $\ptl C_1$ close to $v_1$
whose distance from $p$ is larger than $d(p,v_1)$. Analogously, $D(C_1)\neq d(p,v_2)$.
On the other hand, consider the equilateral triangle $\tau$ determined by $v_1$, $v_2$
and the corresponding vertex of $T_r(C)$.
The largest distance between two points in $\tau$ is given by $d(v_1,v_2)$.
Therefore, for any two arbitrary vertices $x,y$ of $C$ contained in $C_1$,
$D(C_1)$ cannot be attained either by $d(x,y)$ or by $d(v_i,x)$, with $i=1,2$,
so the remaining possibilities are $D(C_1)=d(v_1,v_2)$ or $D(C_1)=d(p,x)$.
\end{proof}

Both possibilities from Proposition~\ref{prop:minimodm} may occur, as illustrated in the following lemma.
We shall denote by $\mathscr{P}$ the family of 3-rotationally symmetric \emph{regular} polygons, which
will have necessarily $3n$ edges, $n\in\nn$.

\begin{lemma}
\label{le:min-regular}
Consider $C\in\mathscr{P}$, with $m=3n$ edges, $n\in\nn$.
Let $T_b$ be the standard trisection of $C$ with endpoints $v_1,v_2,v_3$.
Then
\begin{itemize}
\item[i)] $d_M(T_b,C)=d(v_1,v_2)$ if $n>1$, and
\item[ii)] $d_M(T_b,C)=d(p,x)$ if $n=1$,
\end{itemize}
where $p$ is the center of $C$ and $x$ is any vertex of $C$.
\end{lemma}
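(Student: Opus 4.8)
The plan is to apply Proposition~\ref{prop:minimodm}, which tells us that for any $3$-rotationally symmetric convex polygon the maximum relative diameter $d_M(T_b)$ equals either $d(v_1,v_2)$ or $d(p,x)$ for a vertex $x$ of $C$; the whole task is then to decide which of the two quantities is larger for a regular $m$-gon, $m=3n$. Since $C\in\mathscr{P}$ is regular, all its vertices lie on its circumscribed circle, so $d(p,x)=R$ is the circumradius, the same for every vertex $x$. Thus I would compute both $d(v_1,v_2)$ and $R$ explicitly in terms of $n$ (normalising, say, the circumradius to $R=1$), and compare.

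For the concrete computation: the endpoints $v_1,v_2,v_3$ of the standard trisection are the midpoints of the edges of the smallest enclosing equilateral triangle $T_r(C)$. For a regular $3n$-gon, $T_r(C)$ is obtained, following the construction in Section~\ref{se:preliminaries}, by taking the tangent line at the point $m\in\ptl C$ closest to $p$ — which for a regular polygon is the midpoint of an edge — together with its two rotates by $120^\circ$ and $240^\circ$. So $v_1,v_2,v_3$ are precisely three edge-midpoints of $C$ that are $120^\circ$ apart as seen from $p$, i.e. $d(p,v_i)=r$ where $r=\cos(\pi/m)$ is the apothem of $C$ (with $R=1$), and the angle $\angle v_1 p v_2 = 120^\circ$. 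Hence
\[
d(v_1,v_2)=2r\sin(60^\circ)=\sqrt{3}\,\cos\!\Big(\frac{\pi}{3n}\Big).
\]
Now the inequality to check is $d(v_1,v_2)$ versus $d(p,x)=R=1$, i.e.
\[
\sqrt{3}\,\cos\!\Big(\frac{\pi}{3n}\Big)\ \gtrless\ 1,
\qquad\text{equivalently}\qquad
\cos\!\Big(\frac{\pi}{3n}\Big)\ \gtrless\ \frac{1}{\sqrt{3}}.
\]
For $n=1$ the left side is $\cos(\pi/3)=\tfrac12<\tfrac1{\sqrt3}$, so $d(v_1,v_2)<d(p,x)$ and therefore $d_M(T_b,C)=d(p,x)$, giving case (ii). (Indeed for the equilateral triangle $v_1,v_2,v_3$ are the edge-midpoints and $d(v_1,v_2)$ is half a side, clearly shorter than the distance from the centroid to a vertex.) For $n\ge 2$ we have $\pi/(3n)\le \pi/6$, hence $\cos(\pi/(3n))\ge \cos(\pi/6)=\sqrt3/2 > 1/\sqrt3$, so $d(v_1,v_2)>d(p,x)$ and $d_M(T_b,C)=d(v_1,v_2)$, giving case (i). Since $\cos$ is increasing on this range, the same inequality persists for all larger $n$, and the dichotomy is complete.

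The only genuinely delicate point is the \emph{geometric} identification of $T_r(C)$ for a regular $3n$-gon — i.e. verifying that the edge-midpoint is really the boundary point closest to $p$ and hence that $v_1,v_2,v_3$ are edge-midpoints at mutual angle $120^\circ$ — but this follows directly from the construction of $T_r(C)$ recalled in Section~\ref{se:preliminaries} together with the obvious fact that, on a regular polygon, $d(p,\cdot)$ on the boundary is minimised at edge-midpoints and maximised at vertices. Once that is in place, everything reduces to the elementary trigonometric comparison above, so I do not anticipate a serious obstacle; the argument is essentially a one-line consequence of Proposition~\ref{prop:minimodm} plus the inequality $\cos(\pi/(3n)) \ge 1/\sqrt3 \iff n\ge 2$.
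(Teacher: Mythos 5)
Your proof is correct and follows essentially the same route as the paper: invoke Proposition~\ref{prop:minimodm} to reduce to comparing $d(v_1,v_2)$ with $d(p,x)$, compute both explicitly by elementary trigonometry (the paper normalises by the apothem $a(C)$, getting $d(v_1,v_2)=\sqrt{3}\,a(C)$ versus $d(p,x)=a(C)/\cos(\pi/m)$, which is the same comparison $\sqrt{3}\cos(\pi/m)\gtrless 1$ you obtain with the circumradius normalised to $1$), and conclude that the switch happens exactly between $m=3$ and $m=6$. Your explicit verification that the $v_i$ are edge midpoints of $C$ at mutual angle $120^\circ$ is a detail the paper leaves implicit, but it does not change the argument.
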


\begin{proof}
By Proposition~\ref{prop:minimodm}, we know that $d_M(T_b,C)$ is either $d(v_1,v_2)$ or $d(p,x)$.
It is straightforward checking (by using conveniently the law of sines)
that $d(v_1,v_2)=\sqrt{3}\, a(C)$, and $d(p,x)=a(C)\,\cos^{-1}(\pi/m)$,
where $a(C)$ is the apothem of $C$.
Analytically, $\sqrt{3}>\cos^{-1}(\pi/m)$ if and only if $m>3$, equivalently $n>1$,
as stated.
\end{proof}

\begin{remark}
Above lemma implies that the equilateral triangle is the \emph{unique} 3-rotationally symmetric regular polygon
with $d_M(T_b)=d(p,x)$. For the rest of polygons in $\mathscr{P}$, the maximum relative diameter for the standard
trisection is given by the distance between two of its endpoints.

\end{remark}


We finish this section with the following result for general 3-rotationally symmetric planar convex bodies,
which is analogous to Proposition~\ref{prop:minimodm}.


\begin{proposition}
\label{prop:borde}
Let $C$ be a 3-rotationally symmetric planar convex body, and $T_b$ the standard trisection of $C$. Then
\[
d_M(T_b)=d(v_1,v_2),\text{ or } d_M(T_b)=d(p,x),
\]
where $v_1$, $v_2$ are two endpoints of $T_b$, $p$ is the center of the threefold symmetry, and $x\in\ptl C$.
\end{proposition}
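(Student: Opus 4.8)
The plan is to reduce the smooth/convex-body statement to the polygonal one already proved in Proposition~\ref{prop:minimodm} by an approximation argument. First I would recall the standard fact that any planar convex body $C$ can be approximated in the Hausdorff metric by a sequence of convex polygons $\{C_k\}$; the slight subtlety here is that I need the approximating polygons to retain the threefold symmetry and to have the \emph{same} associated smallest equilateral triangle $T_r(C)$, so that their standard trisections $T_b(C_k)$ converge to $T_b(C)$. One clean way to arrange this: take inscribed polygons whose vertex sets are invariant under the $120^\circ$ rotation about $p$ and which always include the three contact points of $C$ with $\ptl T_r(C)$ (equivalently, the points $m$ and its rotates). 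Then $T_r(C_k)=T_r(C)$ for every $k$, the endpoints $v_1,v_2,v_3$ of the standard trisection are the same for all $k$ and for $C$, and $C_k\to C$ in Hausdorff distance.

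Next I would note that for each $k$, Proposition~\ref{prop:minimodm} applies to the polygon $C_k$, so $d_M(T_b(C_k))$ equals either $d(v_1,v_2)$ or $d(p,x_k)$ for some vertex $x_k$ of $C_k$. Since the vertices of $C_k$ lie on $\ptl C$, in either case $d_M(T_b(C_k))\in\{d(v_1,v_2)\}\cup\{d(p,x):x\in\ptl C\}$. The remaining task is to pass to the limit: I would show that $d_M(T_b(C_k))\to d_M(T_b(C))$. This follows from the continuity of the maximum relative diameter under Hausdorff convergence of the subsets of the trisection — the subset $C_1(C_k)$ (bounded by two segments from $p$ and an arc of $\ptl C_k$) converges in Hausdorff distance to $C_1(C)$, and the diameter functional is continuous with respect to Hausdorff convergence of compact sets, so $D(C_1(C_k))\to D(C_1(C))$, and likewise for the (congruent) pieces $C_2,C_3$. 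Taking $k\to\infty$, the value $d_M(T_b(C))$ is a limit of numbers each lying in the closed set $\{d(v_1,v_2)\}\cup\{d(p,x):x\in\ptl C\}$ (closed because $\ptl C$ is compact), hence belongs to it; this is exactly the claimed conclusion, with the point $x\in\ptl C$ obtained as a subsequential limit of the $x_k$.

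Alternatively — and perhaps more in the spirit of the paper — one could argue directly without approximation. Assume $d_M(T_b)=D(C_1)=d(a,b)$ with $a,b\in\ptl C_1$ by Lemma~\ref{le:borde}. The boundary of $C_1$ consists of the two segments $\overline{pv_1}$, $\overline{pv_2}$ and the arc $\alpha$ of $\ptl C$ joining $v_1$ to $v_2$. One checks, exactly as in the proof of Proposition~\ref{prop:minimodm}, that neither $a$ nor $b$ can be an interior point of the segments $\overline{pv_1}$ or $\overline{pv_2}$ (a point slightly beyond along the segment, toward $v_1$ or $v_2$, is farther from the other point), so each of $a,b$ is either $p$ or lies on the arc $\alpha\subseteq\ptl C$. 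If both lie on $\ptl C$, then since $\alpha$ is contained in the equilateral triangle $\tau$ with vertices $v_1$, $v_2$ and the corresponding vertex of $T_r(C)$ — here I use that $C\subseteq T_r(C)$ and convexity to confine the arc to $\tau$ — the distance $d(a,b)$ is at most $\mathrm{diam}(\tau)=d(v_1,v_2)$, giving $d_M(T_b)\le d(v_1,v_2)$, which combined with $v_1,v_2\in C_1$ forces $d_M(T_b)=d(v_1,v_2)$. The only other possibility is that one of the two points is $p$ and the other lies on $\ptl C$, i.e. $d_M(T_b)=d(p,x)$ with $x\in\ptl C$.

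The main obstacle in the first route is making the approximation respect the geometry: one must ensure $T_r(C_k)=T_r(C)$ so that the standard trisections genuinely converge, which is why I would build the polygons to pass through the contact points $m$ and its rotates. In the second, more self-contained route the delicate point is the claim that the boundary arc $\alpha$ of $C_1$ lies inside the triangle $\tau$; this needs the observation that $\alpha\subseteq\ptl C\subseteq T_r(C)$ together with the fact that $\alpha$ stays within the $120^\circ$ sector at $p$ cut out by the rays through $v_1$ and $v_2$, so it is confined to the region $T_r(C)\cap(\text{that sector})$, which is precisely $\tau$. Once that is granted the diameter bound is immediate, and the rest is a verbatim repetition of the argument already used for polygons.
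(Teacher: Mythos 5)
Your second, ``direct'' route is essentially the paper's own proof: by Lemma~\ref{le:borde} reduce to $a,b\in\ptl C_1$, split $\ptl C_1$ into the arc on $\ptl C$ and the two segments in $\inte(C)$, rule out interior points of the segments, confine the boundary arc to the small equilateral triangle $\tau$ on the edge $\overline{v_1v_2}$, and read off the dichotomy. One caveat: your justification that the arc $\alpha$ lies in $\tau$ is not correct as stated. The set $T_r(C)\cap(\text{the }120^{\circ}\text{ sector at }p\text{ bounded by the rays }pv_1,\,pv_2)$ is not $\tau$ but the kite with vertices $p$, $v_1$, $V$, $v_2$ (where $V$ is the vertex of $T_r(C)$ between $v_1$ and $v_2$), and this kite has diameter $d(p,V)=2\,d(p,v_1)>d(v_1,v_2)$, so membership in it does not bound $d(a,b)$ by $d(v_1,v_2)$. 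What rescues the step is Remark~\ref{re:ball}: $\ptl C$ avoids the interior of the inscribed ball $B(C)$, and every point of the kite outside $\tau$ lies at distance strictly less than the apothem from $p$, hence in $\inte B(C)$; therefore $\alpha\subseteq\tau$. (The paper's proof glosses over the same point.) With that observation supplied, your direct argument is complete and matches the paper's.

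Your first route, by polygonal approximation, has a genuine gap: the claim that inscribed polygons $C_k$ whose vertex sets contain $m$ and its rotates satisfy $T_r(C_k)=T_r(C)$ is false. The apothem of $T_r(C_k)$ equals $d(p,\ptl C_k)$, and for an inscribed polygon the closest point of $\ptl C_k$ to $p$ is generically an interior point of an edge, strictly closer to $p$ than $m$ is (take $C$ a disk: every chord comes strictly closer to the centre than the circle does). Hence $T_r(C_k)\subsetneq T_r(C)$, the endpoints $v_i(C_k)$ are not the $v_i(C)$, and $T_b(C_k)$ is not the restriction of $T_b(C)$ to $C_k$; applying Proposition~\ref{prop:minimodm} to $C_k$ then yields a dichotomy involving the wrong points, and the limiting argument no longer lands in the closed set you describe. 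This could probably be repaired (the apothems converge, and after passing to a subsequence so do the orientations of $T_r(C_k)$), but it requires real additional work and a careful treatment of the non-uniqueness of $T_r$ for sets such as the disk. Since your second route already works, the approximation argument should simply be dropped.
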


\begin{proof}
Let $C_1,C_2,C_3$ be the identical subsets of $C$ given by $T_b$.
We can assume that $d_M(T_b)=d(a,b)$, with $a,b\in \ptl C_1$, in view of Lemma~\ref{le:borde}.
In fact, those points will belong to $A:=\ptl C_1\cap \ptl C$ or $B:=\ptl C_1\cap\inte(C)$.
If both of them belong to $B$, then clearly $d(a,b)<d(v_1,v_2)$, which is contradictory.
If both points belong to $A$, as $C$ is contained in the smallest triangle $T_r(C)$,
it follows that $A$ will be contained in the equilateral triangle determined by the edge $\overline{v_1v_2}$,
and so $d(a,b)\leq d(v_1,v_2)$. Finally, if $a\in A$ and $b\in B$, it is easy to check
that the only admissible possibility for $d_M(T_b)$ is $d(p,x)$, with $x\in A$.
\end{proof}

\begin{remark}
We stress that Proposition~\ref{prop:borde} implies that one of the points realizing
the maximum relative diameter for the standard trisection will necessarily lie in the boundary of the considered set.
This property does not hold for general trisections, where that value 
might be achieved by two points from the interior of the set.
\end{remark}

%

\section{Trisections minimizing the maximum relative diameter}
\label{se:general}

In this section we shall prove that, for any 3-rotationally symmetric planar convex set $C$,
its associated standard trisection $T_b$ gives the minimum possible value for the maximum relative diameter $d_M$.
The key point is the previous Proposition~\ref{prop:borde}, which describes the two possible precise values for $d_M(T_b,C)$.
In fact, we shall see in Lemmata~\ref{le:primero} and \ref{le:segundo} that,
for any trisection $T$ of $C$, $d_M(T,C)$ is always greater than or equal to each of those possibilities,
leading us to our main Theorem~\ref{th:main}. Furthermore, at the end of this section
we shall discuss about the uniqueness of the minimizing trisection. Several examples will show that
uniqueness does not occur in general for this problem.


\begin{lemma}
\label{le:primero}
Let $C$ be a 3-rotationally symmetric planar convex body, and let $T$ be a trisection of $C$.
For any $x\in\ptl C$, we have that $$d_M(T)\geq d(p,x),$$ where $p$ is the center of symmetry of $C$.
\end{lemma}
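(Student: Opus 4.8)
The plan is to show that for \emph{any} trisection $T=\{C_1,C_2,C_3\}$ of $C$ and any fixed $x\in\ptl C$, at least one of the three subsets $C_i$ contains \emph{both} the point $x$ and the center $p$, from which $d_M(T)\geq D(C_i)\geq d(p,x)$ immediately follows. First I would recall that $p\in\inte(C)$ (since $B(C)\subseteq C$ is a ball centered at $p$ with positive radius), so $p$ lies in the closure of at least one $C_i$; in fact, since the $C_i$ cover $C$ and the common point $c$ together with the curves $\ga_{ij}$ form a measure-zero set, $p$ belongs to some $C_i$ — and if $p$ happens to lie on one of the dividing curves or equals $c$, it belongs to two or three of them simultaneously, which only helps. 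So the real content is: the fixed boundary point $x$ and the center $p$ can always be forced into a common piece.

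The key step is a topological/connectedness argument. The three curves $\ga_{12},\ga_{23},\ga_{13}$ emanate from the common point $c\in\inte(C)$ and reach $\ptl C$ at three distinct endpoints $e_{12},e_{23},e_{13}$; together $\Ga:=\ga_{12}\cup\ga_{23}\cup\ga_{13}$ is a connected ``tripod'' joining $c$ to $\ptl C$ in three places, and $C\setminus\Ga$ has exactly three components, the interiors of $C_1,C_2,C_3$. The segment $\overline{px}$ (or, if one prefers to avoid convexity of the $C_i$, any continuous arc from $p$ to $x$ inside $C$) must be considered: if it avoids the relative interiors of all three curves and also avoids $c$, then it stays within a single component, so $p$ and $x$ lie in the same $C_i$ and we are done. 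The remaining case is that $\overline{px}$ does meet $\Ga$. Here I would use the standard fact that walking along $\overline{px}$ from $p$ toward $x$, at the first crossing point we transition between two of the $C_i$'s that share that boundary curve; but the crossing point itself lies on $\ga_{ij}=\ptl C_i\cap\ptl C_j$, hence belongs to \emph{both} $C_i$ and $C_j$. Thus the real claim to nail down is simply that $x$ itself, being on $\ptl C$, belongs to whichever $C_i$ borders $\ptl C$ near $x$, and that $p$ belongs to some $C_j$; then one chain-of-overlaps argument along $\overline{px}$ shows there is a single $C_i$ containing both. Concretely: let $C_i$ be a piece containing $p$ and let $C_k$ be a piece containing $x$; if $i=k$ we are done, and otherwise the connected arc $\overline{px}$ must pass through a point of $\ptl C_i$, which lies on some $\ga_{ij}\subseteq C_j$, and iterating (there are only three pieces) we find either that $p\in C_k$ or $x\in C_i$ after all, because any two of the $C_i$'s meeting along a curve share every point of that curve.

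The main obstacle, and the step I would spend the most care on, is making the ``the arc stays in one component unless it crosses $\Ga$, and crossing points lie in two pieces'' argument rigorous without over-relying on the curves being embedded arcs in general position — the definition of trisection only says they are planar curves meeting at $c$ with other endpoints on $\ptl C$, so a priori $\Ga$ could be wild. A clean way around this is to avoid point-set pathologies entirely: observe that $\{C_1,C_2,C_3\}$ is a closed cover of $C$, so the set $\{i : p\in C_i\}$ and $\{i : x\in C_i\}$ are both nonempty; if they intersect we are done; if not, then $\overline{px}\subseteq C=C_1\cup C_2\cup C_3$ is a connected set covered by three closed sets no one of which contains both endpoints, which (together with the fact that the pairwise intersections are exactly the connected curves $\ga_{ij}$) forces $\overline{px}$ to meet some $\ga_{ij}$, and that intersection point lies in $C_i\cap C_j$ — then replacing $x$ by that point and recursing on the shorter subsegment, after at most two steps we reach a piece containing $p$, giving the needed common piece. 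In all cases the piece $C_i$ containing both $p$ and $x$ satisfies $d_M(T)\geq D(C_i)\geq d(p,x)$, as claimed.
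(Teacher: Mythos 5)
Your central claim --- that for every trisection $T$ and every $x\in\ptl C$ some piece $C_i$ contains \emph{both} $p$ and $x$ --- is false, and this is a fatal gap. Take any trisection whose common point $c$ is different from $p$ and whose curves avoid $p$; then $p\in\inte(C_1)$ for exactly one piece, and any boundary point $x$ lying in the relative interior of $\ptl C\cap\ptl C_2$ belongs to $C_2$ only. No piece contains both points. Your fallback recursion does not repair this: when $\overline{px}$ first meets a dividing curve $\ga_{ij}$ at some point $y$, you end up with a piece containing $p$ and $y$, which only gives $d_M(T)\geq d(p,y)$ with $y$ strictly between $p$ and $x$ --- a strictly weaker bound than the one you need. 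A symptom of the problem is that your argument never uses the threefold rotational symmetry of $C$, which is in fact the essential ingredient.

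The paper's proof runs differently and you should compare it with your attempt. Let $x',x''$ be the images of $x$ under the rotations by $120$ and $240$ degrees, so $d(p,x)=d(p,x')=d(p,x'')$. If the piece containing $p$ also contains one of $x,x',x''$, the bound $d_M(T)\geq d(p,x)$ is immediate. Otherwise all three points are distributed among the remaining two pieces, so by pigeonhole two of them, say $x'$ and $x''$, lie in a common piece $C_i$; since they subtend an angle of $120$ degrees at $p$ and are equidistant from $p$, one has $d(x',x'')=\sqrt{3}\,d(p,x)>d(p,x)$, whence $d_M(T)\geq D(C_i)\geq d(x',x'')>d(p,x)$. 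This orbit-plus-pigeonhole device is what makes the lemma work, and it also sidesteps all of the point-set topology you were worried about.
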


\begin{proof}
Let $C_1,C_2,C_3$ be the subsets of $C$ given by $T$. We can assume that $p\in C_1$.
Call $x',x''$ the two threefold symmetric points of $x$ in $\ptl C$, 
which will satisfy $d(p,x)=d(p,x')=d(p,x'')$.
If any of these three points $x,x',x''$ belongs to $C_1$, then the thesis is trivial
since $d_M(T)\geq D(C_1)\geq d(p,x)$.
Otherwise, two of them will be necessarily contained in the same subset $C_i$, for $i\in\{2,3\}$, say $x',x''$.
Therefore $d_M(T)\geq D(C_i)\geq d(x',x'')$. It is straightforward
checking that $d(x',x'')>d(p,x)$, which finishes the proof.
\end{proof}

\begin{remark}
\label{re:uni}
In the previous proof, we obtain \emph{strict} inequality $d_M(T)>d(p,x)$ if $p\in\inte(C_1)$,
since in that case, by assuming for instance that $x\in C_1$ and proceeding as in the proof of Lemma~\ref{le:borde}, we will find points in $C_1$ whose distance to $x$ is larger than $d(p,x)$.
\end{remark}

\begin{lemma}
\label{le:segundo}
Let $C$ be a 3-rotationally symmetric planar convex body, and let $T$ be a trisection of $C$.
Then we have that $$d_M(T)\geq d(v_1,v_2),$$ where $v_1,v_2$ are endpoints of the standard trisection $T_b$ of $C$.
\end{lemma}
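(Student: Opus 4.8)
The plan is to argue by contradiction: suppose $T$ is a trisection of $C$ with $d_M(T) < d(v_1,v_2)$, and derive a contradiction with the area constraint. Recall that, by the construction in Section~\ref{se:tb}, the three endpoints $v_1,v_2,v_3$ of the standard trisection are the midpoints of the edges of the smallest equilateral triangle $T_r(C)$, and the segment $\overline{v_1v_2}$ has length $\sqrt{3}\,a$, where $a$ is the apothem of $T_r(C)$ (equivalently the radius of the inscribed ball $B(C)$ from Remark~\ref{re:ball}). The key geometric observation I would isolate first is this: if a subset $S \subseteq C$ has diameter $D(S) < d(v_1,v_2)$, then $S$ cannot contain two points $q_1, q_2 \in \ptl C$ whose threefold-symmetric "angular separation" seen from the center $p$ is too large — more precisely, the chord $\overline{v_1v_2}$ of $B(C)$ subtends an angle of $120^\circ$ at $p$, and since $B(C)\subseteq C$, any two boundary points of $C$ at angular distance $\ge 120^\circ$ about $p$ are at distance $\ge d(v_1,v_2)$ from one another (this is where I would use $B(C)\subseteq C$ together with the monotonicity of chord length in the subtended angle, handling the fact that points of $\ptl C$ lie on or outside $B(C)$). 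Hence each subset $C_i$ of $T$, having diameter $< d(v_1,v_2)$, meets $\ptl C$ in an arc of angular extent strictly less than $120^\circ$.

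Now I would run a counting / covering argument on $\ptl C$. The three curves $\ga_{12},\ga_{23},\ga_{13}$ of $T$ have their outer endpoints $p_1,p_2,p_3$ on $\ptl C$, and these three points cut $\ptl C$ into three arcs; each such arc is covered by exactly one of the three sets $C_i$ (by connectedness of the $C_i$ and property (iv) of a trisection, an arc between consecutive endpoints lies in the boundary of a single subset). By the previous paragraph, each of these three arcs has angular extent (measured about $p$) strictly less than $120^\circ$. But the three arcs together make up all of $\ptl C$, whose total angular extent about $p$ is $360^\circ$ — a contradiction. Therefore $d_M(T) \ge d(v_1,v_2)$.

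A little care is needed at two points, and I expect the angular-separation estimate to be the main obstacle. First, the statement "two boundary points of $C$ at angular distance $\ge 120^\circ$ about $p$ are at Euclidean distance $\ge d(v_1,v_2)$" must be proved using only $B(C)\subseteq C$: given such points $q_1,q_2$, consider the rays $\overrightarrow{pq_1}, \overrightarrow{pq_2}$, let $q_1', q_2'$ be where they meet $\ptl B(C)$; then $d(q_1,q_2) \ge d(q_1',q_2')$ because $q_i$ lies on the ray beyond $q_i'$ and the angle at $p$ is $\ge 120^\circ$ (for a fixed angle $\ge 120^\circ\ge 90^\circ$, pushing either endpoint radially outward from $p$ cannot decrease the distance), and $d(q_1',q_2')\ge \sqrt{3}\,a = d(v_1,v_2)$ since a chord of $B(C)$ subtending an angle $\ge 120^\circ$ is at least as long as one subtending exactly $120^\circ$. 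Second, one must justify that each boundary arc between consecutive endpoints of $T$ is contained in the boundary of a single $C_i$; this follows from property (iv) together with the fact that the $\ga_{ij}$ are the only parts of $\ptl C_i \cap \ptl C_j$ and meet $\ptl C$ only at their endpoints, so crossing from one $C_i$ to another along $\ptl C$ is impossible except at the points $p_1,p_2,p_3$. With these two facts in hand, the contradiction is immediate, and this also matches the role the lemma plays: combined with Lemma~\ref{le:primero} and Proposition~\ref{prop:borde}, it forces $d_M(T)\ge d_M(T_b)$ for every trisection $T$.
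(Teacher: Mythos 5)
Your argument is correct, but it follows a genuinely different route from the paper's. The paper proves the lemma by a case analysis on the outer endpoints $w_1,w_2,w_3$ of $T$ relative to the endpoints $v_1,v_2,v_3$ of $T_b$: if they are not interlaced, a pigeonhole argument places two $v_i$'s in one subset; if they are interlaced, the standard trisection is rotated until one of its endpoints hits a $w_j$, and a comparison of distances (using the picture of a ``striped region'' and the inscribed ball) gives $d(w_1,w_3)>d(v_1,v_3)$. You instead argue by contradiction via an angular-measure pigeonhole: using $B(C)\subseteq C$ and the law of cosines, any two points of $\ptl C$ with angular separation at least $120^\circ$ about $p$ are at distance at least $2a\sin 60^\circ=\sqrt{3}\,a=d(v_1,v_2)$ (your radial monotonicity step is valid since $\cos\theta\le 0$ for $\theta\ge 90^\circ$), so if every $D(C_i)<d(v_1,v_2)$ then each of the three boundary arcs cut out by $w_1,w_2,w_3$ subtends strictly less than $120^\circ$ at $p$, and the three extents cannot sum to $360^\circ$. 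The one micro-gap is the passage from ``arc of extent $\ge 120^\circ$'' to ``two points at angular distance $\ge 120^\circ$'': for very long arcs the two \emph{endpoints} may be angularly close, so you should instead pick two points $120^\circ$ apart along the arc; this is a one-line fix. Your approach is arguably cleaner and avoids the figure-dependent rotation step, and it never needs to track where the $v_i$ fall; the paper's approach, on the other hand, directly yields the strictness refinement recorded in Remark~\ref{re:topo} (that $d_M(T)>d(v_1,v_2)$ unless the endpoints of $T$ coincide with those of $T_b$), which is used later for the uniqueness discussion and would require an extra equality analysis in your version.
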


\begin{proof}
Call $w_1,w_2,w_3$ the endpoints of the trisection $T$, and $C_1,C_2,C_3$ the subsets of $C$ given by $T$.
We shall distinguish two cases in this proof.

\noindent \emph{Case 1:} Assume that an endpoint of $T$ coincides with an endpoint of $T_b$, say $w_1=v_1$.
Decompose $\ptl C-\{w_1\}$ into three disjoint curves $\alpha, \beta, \ga$,
where $\alpha=(w_1,v_2]$, $\beta=(w_1,v_3]$ and $\ga=(v_2,v_3)$. Clearly $w_2,w_3\in\alpha\cup\beta\cup\ga$.
It is straightforward checking, by discussing the precise placement of $w_2$ and $w_3$ along $\ptl C$,
that there will be always two endpoints $v_i,v_j$ of $T_b$ contained in a same subset $C_i$, and so
$$d_M(T)\geq D(C_i)\geq d(v_i,v_j).$$

\noindent \emph{Case 2:} Assume that the endpoints of $T$ do not coincide with the endpoints of $T_b$.
Then, all these endpoints have to be placed alternately along $\ptl C$, say $v_1,w_1,v_2,w_2,v_3,w_3$
(otherwise, two endpoints of $T_b$ will belong to a same subset $C_i$ and so $d_M(T)\geq d(v_1,v_2)$, as previously).
Rotate the standard trisection $T_b$ with its edges extended until meeting an endpoint of $T$, say $w_3$.
This rotation will move the endpoints of $T_b$ onto three new points $v_1',v_2',v_3'$ on $\ptl B(C)$,
with $d(v_i',v_j')=d(v_i,v_j)$, $i,j\in\{1,2,3\}$.
Note that $w_1$ will be contained by construction in the striped region from Figure \ref{fig:rotated},
and therefore $w_1$ is not contained in the ball centered at $w_3$ of radius $d(w_3,v_1')=d(w_3,v_2')$,
so $d(w_1,w_3)>d(w_3,v_1')$. Hence
$$d_M(T)\geq d(w_1,w_3)>d(w_3,v_1')>d(v_3',v_1')=d(v_3,v_1),$$
where the last inequality above is trivial.
\end{proof}

\begin{figure}[ht]
  \includegraphics[width=0.33\textwidth]{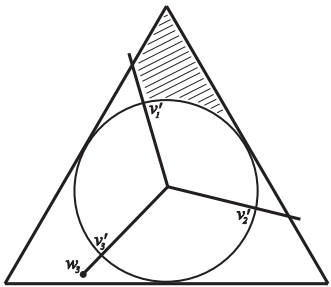}\\
  \caption{Points $v_1',v_2',v_3'$ after rotating $T_b$.
  The endpoint $w_1$ of $T$ will be contained in the striped region}
  \label{fig:rotated}
\end{figure}

\begin{remark}
\label{re:topo}
A refinement in the proof of Lemma~\ref{le:segundo} gives the \emph{strict} inequality $d_M(T)>d(v_1,v_2)$,
unless the three endpoints of $T$ coincide with the endpoints of the standard trisection $T_b$.
In fact, if the endpoint $v_1$ of $T_b$ is not an endpoint of $T$,
we can find a point $q$ close to $v_1$, and another endpoint $v_i$ of $T_b$,
with $q,v_i$ contained in a same subset given by $T$, and satisfying $d(q,v_i)>d(v_1,v_i)$.
\end{remark}


The two previous lemmata allow us to state our main result: for any 3-rotationally symmetric
planar convex body, its associated standard trisection gives the minimum value for the maximum relative diameter.

\begin{theorem}
\label{th:main}
Let $C$ be a 3-rotationally symmetric planar convex body, and $T_b$ its associated standard trisection. Then,
\[
d_M(T_b,C)\leq d_M(T,C),
\]
for any trisection $T$ of C.
\end{theorem}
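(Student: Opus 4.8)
The plan is to assemble Theorem~\ref{th:main} directly from the three results already established, so the proof itself is short: the real work has been front-loaded into Proposition~\ref{prop:borde} and Lemmata~\ref{le:primero} and~\ref{le:segundo}.

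First I would invoke Proposition~\ref{prop:borde} to pin down the value $d_M(T_b,C)$ exactly: for the standard trisection there are only two possibilities, namely $d_M(T_b)=d(v_1,v_2)$, the distance between two of its endpoints, or $d_M(T_b)=d(p,x)$ for some $x\in\ptl C$, where $p$ is the center of the threefold symmetry. The argument then splits into these two cases. In the first case, Lemma~\ref{le:segundo} says precisely that $d_M(T)\geq d(v_1,v_2)$ for \emph{every} trisection $T$ of $C$, hence $d_M(T,C)\geq d(v_1,v_2)=d_M(T_b,C)$. In the second case, Lemma~\ref{le:primero} applied to that same boundary point $x$ gives $d_M(T)\geq d(p,x)$ for every trisection $T$, hence $d_M(T,C)\geq d(p,x)=d_M(T_b,C)$. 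Since one of the two cases always holds, we obtain $d_M(T_b,C)\leq d_M(T,C)$ in all situations, which is the claim.

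There is essentially no obstacle left at the level of this theorem; the delicate point has already been dealt with, and it lives inside Lemma~\ref{le:segundo} --- specifically the rotation of $T_b$ (with its edges extended) until it meets an endpoint $w_3$ of $T$, and the resulting placement of $w_1$ in the striped region of Figure~\ref{fig:rotated}, which forces the chain of inequalities $d_M(T)\geq d(w_1,w_3)>d(w_3,v_1')>d(v_3',v_1')=d(v_3,v_1)$. If anything needs a line of care in the writeup of Theorem~\ref{th:main}, it is merely to remark that the case division of Proposition~\ref{prop:borde} is exhaustive and that the point $x$ supplied there is a legitimate input for Lemma~\ref{le:primero} (it lies in $\ptl C$), so no further hypotheses are needed.

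I would also add one sentence connecting to the uniqueness discussion: combining Remarks~\ref{re:uni} and~\ref{re:topo}, the inequality $d_M(T_b,C)\leq d_M(T,C)$ is strict unless $T$ shares all three endpoints with $T_b$ (and, in the triangle case, has its common point at $p$), which is the expected source of the non-uniqueness examples announced earlier. This is not needed for the theorem as stated, but it frames what follows and costs little.
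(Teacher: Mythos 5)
Your proposal is correct and follows exactly the paper's own argument: apply Proposition~\ref{prop:borde} to reduce $d_M(T_b,C)$ to one of the two values $d(v_1,v_2)$ or $d(p,x)$, then conclude by Lemma~\ref{le:segundo} or Lemma~\ref{le:primero} respectively. The extra remarks on exhaustiveness of the case split and on strictness of the inequality are accurate but not needed for the statement.
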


\begin{proof}
From Proposition~\ref{prop:borde} we have that $d_M(T_b,C)$ equals either $d(v_1,v_2)$ or $d(p,x)$,
where $v_1,v_2$ are endpoints of $T_b$, $p$ is the center of symmetry and $x\in\ptl C$.
By using Lemmata~\ref{le:primero} or \ref{le:segundo}, we conclude that $d_M(T,C)\geq d_M(T_b,C)$.
\end{proof}


\subsection{Uniqueness of the minimizing trisection}

For this particular problem involving the maximum relative diameter functional,
uniqueness of the minimizing trisection does not hold.
This can be observed with some simple examples.
Figure~\ref{fig:unicidad} shows some trisections of the regular hexagon $H$
obtained by small-enough appropriate deformations of its standard trisection $T_b(H)$,
which give the same minimum value for our functional.
And Figure~\ref{fig:unicidad2} shows that, by rotating slightly the standard trisection for the equilateral triangle $\mathcal{T}$,
we obtain new trisections with the same maximum relative diameter.
Thus, the minimizing trisection for a set of our class is not unique, and we cannot aim to find a complete characterization of \emph{all} of them.

\begin{figure}[ht]
  \includegraphics[width=0.8\textwidth]{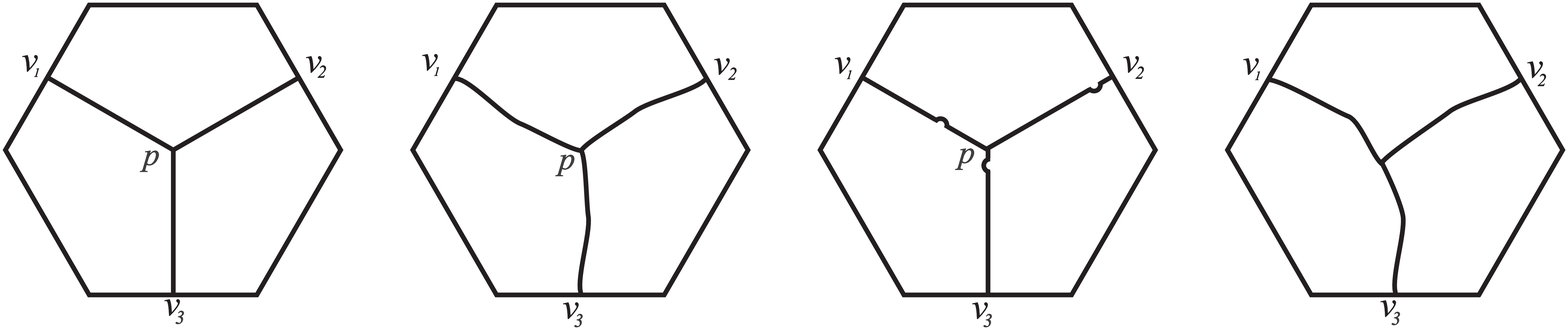}\\
  \caption{Four different minimizing trisections for $H$.
  The first trisection is $T_b(H)$, and the rest of them are obtained by small deformations.
  In the last one, the common point does not coincide with the center of symmetry}
  \label{fig:unicidad}
\end{figure}

\begin{figure}[ht]
  \includegraphics[width=0.69\textwidth]{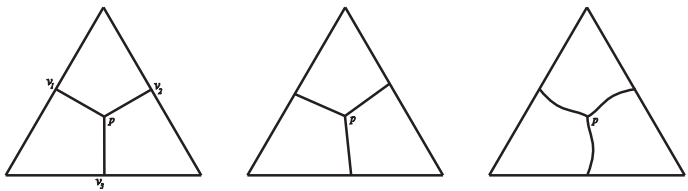}\\
  \caption{Three different minimizing trisections for $\mathcal{T}$:
  the standard trisection $T_b(\mathcal{T})$, a trisection obtained by rotating slightly $T_b(\mathcal{T})$,
  and a trisection determined by curves}
  \label{fig:unicidad2}
\end{figure}


Remarks~\ref{re:uni} and \ref{re:topo} provide necessary conditions for a trisection to be minimizing.
More precisely, for a 3-rotationally symmetric planar convex body $C$, and any minimizing trisection $T$,
it follows that $d_M(T)$ will be equal to $d_M(T_b)$,
where $T_b$ is the standard trisection associated to $C$, by using Theorem~\ref{th:main}.
Taking into account Proposition~\ref{prop:borde} and the notation therein,
if $d_M(T_b)$ is given by $d(v_i,v_j)$, then the endpoints of $T$ will coincide with the endpoints of $T_b$ due to Remark~\ref{re:topo}. On the other hand, if $d_M(T_b)$ is given by $d(p,x)$, Remark~\ref{re:uni} implies that
the common point of $T$ will necessarily be the center of symmetry of the set.

Finally, if we restrict our problem to trisections \emph{by line segments},
we can assure uniqueness only if the maximum relative diameter for the standard trisection
is given by $d(v_i,v_j)$, due to Remark~\ref{re:topo} and the fact that, in that case, the
common point of the trisection must coincide with the center of symmetry of the set, in order to
satisfy the subdivision area condition.
In the other case, when the maximum relative diameter for the standard trisection equals $d(p,x)$,
Figure~\ref{fig:unicidad2} shows that uniqueness does not occur even for trisections by line segments.

\section{Lower bound for $d_M$ in the class of 3-rotationally symmetric planar convex bodies}
\label{se:min}

In this section we shall discuss the following related questions: for a given 3-rotationally symmetric planar convex body $C$
of unit area, and an arbitrary trisection $T$ of $C$, can we estimate a lower bound for $d_M(T,C)$?
Can we characterize a set attaining such lower bound?
These questions are interesting in the context of fencing problems, because
they lead to useful relative geometrical inequalities \cite{pe}.
We remark that this problem is analogous to the one studied in \cite{mps},
where a lower bound for the maximum relative diameter of \emph{bisections} of \emph{centrally symmetric} planar convex
bodies is obtained, describing also the corresponding optimal body \cite[Th.~5]{mps}.

In our setting, it is clear that we have to focus on the standard trisections,
which give the minimum value for the maximum relative diameter functional $d_M$ for any set in our family, in view of Theorem~\ref{th:main}.
For homogeneity, we shall consider the class of 3-rotationally symmetric planar convex bodies of \emph{unit area},
denoted by $\mathscr{C}_1$. Hence our problem consists of estimating
\[
\min\{d_M(T_b,C): C\in\mathscr{C}_1\}.
\]

\noindent We shall start with the subfamily $\mathscr{P}_1$ of $\mathscr{C}_1$
composed by the regular 3-rotationally symmetric polygons of unit area.
Recall that these polygons will have necessarily $m=3n$ edges, with $n\in\nn$.
Taking into account Lemma~\ref{le:min-regular}, where the maximum relative diameter is precisely computed for the
sets in $\mathscr{P}_1$, we shall prove that the minimum value is given by the equilateral triangle.

%

\begin{theorem}
\label{th:min}
Let $\mathscr{P}_1\subset\mathscr{C}_1$ be the family of regular 3-rotationally symmetric polygons of unit area.
Then, $$\min\{d_M(T_b,C):C\in\mathscr{P}_1\}=d_M(T_b,\mathcal{T}),$$ where $\mathcal{T}$ is the equilateral triangle.
\end{theorem}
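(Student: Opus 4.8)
The plan is to reduce everything to a single-variable inequality in the number of edges $m=3n$ and then verify it. By Lemma~\ref{le:min-regular}, for a regular polygon $C\in\mathscr{P}_1$ with $m=3n$ edges we know that $d_M(T_b,C)=\sqrt3\,a(C)$ when $n>1$ and $d_M(T_b,\mathcal{T})=2\,a(\mathcal{T})$ when $n=1$ (using that for the equilateral triangle $\cos^{-1}(\pi/3)=2$). So the whole statement becomes: express the apothem $a(C)$ as a function of $m$ under the normalization $\area(C)=1$, and show the resulting quantity is minimized at $m=3$.

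First I would compute the area of a regular $m$-gon in terms of its apothem: decomposing into $m$ congruent triangles of base $2a\tan(\pi/m)$ and height $a$ gives $\area(C)=m\,a(C)^2\tan(\pi/m)$, hence under unit area $a(C)=\bigl(m\tan(\pi/m)\bigr)^{-1/2}$. Plugging into Lemma~\ref{le:min-regular}, for $n>1$ we get
\[
d_M(T_b,C)=\sqrt{\frac{3}{m\tan(\pi/m)}},
\]
while for $n=1$ (the triangle) we get $d_M(T_b,\mathcal{T})=2\,(3\tan(\pi/3))^{-1/2}=2\,(3\sqrt3)^{-1/2}$, which equals $\sqrt{4/(3\sqrt3)}$. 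Next I would compare: since both expressions are of the form $\sqrt{c/(m\tan(\pi/m))}$ with $c=3$ for $n>1$ and (effectively) $c=4$ for the triangle, and since $m\tan(\pi/m)$ is \emph{increasing} in $m$ (so its reciprocal is decreasing), the family $\{d_M(T_b,C):n>1\}$ is itself minimized in the limit but among the actual polygons it is smallest for the smallest admissible $m$, i.e.\ $m=6$. So it suffices to check the two-way comparison $d_M(T_b,\mathcal{T})\le d_M(T_b,H_6)$, i.e.\
\[
\frac{4}{3\tan(\pi/3)}\le\frac{3}{6\tan(\pi/6)},
\]
equivalently $\tfrac{4}{3\sqrt3}\le\tfrac{3}{6/\sqrt3}=\tfrac{\sqrt3}{2}$, i.e.\ $8\le 9$, which holds. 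Combined with monotonicity of $m\tan(\pi/m)$, this gives $d_M(T_b,\mathcal{T})\le d_M(T_b,C)$ for every $C\in\mathscr{P}_1$, with the minimum attained (only) at $\mathcal{T}$.

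The only genuine technical point — the ``hard part,'' though it is routine calculus — is establishing that $f(m)=m\tan(\pi/m)$ is strictly increasing for $m\ge 3$ (equivalently, that the apothem of a unit-area regular $m$-gon is decreasing in $m$, which is geometrically obvious since more edges push the polygon toward the disk, the minimal-apothem shape for fixed area). I would handle this by writing $g(t)=\frac{\tan t}{t}$ with $t=\pi/m\in(0,\pi/3]$, noting $g$ is increasing on $(0,\pi/2)$ (standard, from $\tan t>t$ and a derivative sign check $2t>\sin 2t$), so $f(m)=\pi\,g(\pi/m)/(\pi/m)\cdot\frac{1}{\pi/m}$... more cleanly: $f(m)=\pi\cdot\frac{\tan(\pi/m)}{\pi/m}$, and as $m$ increases $\pi/m$ decreases, so $g(\pi/m)$ decreases — wait, that would make $f$ decreasing, so instead I would argue directly via $f'(m)=\tan(\pi/m)-\frac{\pi}{m}\sec^2(\pi/m)=\sec^2(\pi/m)\bigl(\sin(\pi/m)\cos(\pi/m)-\pi/m\bigr)=\sec^2(\pi/m)\bigl(\tfrac12\sin(2\pi/m)-\pi/m\bigr)<0$ since $\sin x<x$. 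Hence $f$ is \emph{decreasing}, so $a(C)=f(m)^{-1/2}$ is \emph{increasing} in $m$ — which means among polygons with $n>1$ the maximum relative diameter $\sqrt3\,a(C)$ grows with $m$ and is smallest at $m=6$, exactly as needed above. With this monotonicity fact in hand the proof is just the single arithmetic comparison $8\le 9$ already carried out.
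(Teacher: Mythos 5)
Your proposal is correct and follows essentially the same route as the paper: use Lemma~\ref{le:min-regular} together with the unit-area apothem formula $a(C)=(m\tan(\pi/m))^{-1/2}=m^{-1/2}\cot^{1/2}(\pi/m)$, show that $d_M(T_b,C)=\sqrt{3}\,a(C)$ is increasing in $m$ for $n>1$ (a monotonicity the paper merely calls ``straightforward'' and you actually verify via $f'(m)=\sec^2(\pi/m)\bigl(\tfrac12\sin(2\pi/m)-\pi/m\bigr)<0$), and then settle the single comparison triangle versus hexagon, which reduces to $8\le 9$. The only blemish is the transient, self-contradicted claim that $m\tan(\pi/m)$ is increasing; since you correct it with the derivative computation and the final monotonicity direction is the right one, the argument as a whole stands, but that sentence should be deleted in a clean write-up.
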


\begin{proof}
For any polygon $C\in\mathscr{P}_1$ with $m=3n$ edges, $n\in\nn$,
it is easy to check that the apothem $a(C)$ of $C$ is equal to $m^{-1/2}\,\cot^{1/2}(\pi/m)$.
Using this fact, we can compare the maximum relative diameter of the sets in $\mathscr{P}_1$,
taking into account Lemma~\ref{le:min-regular} and its proof.

From Lemma~\ref{le:min-regular}, if $n>1$, then
$$d_M(T_b,C)=d(v_1,v_2)=\sqrt{3}\,a(C)=\sqrt{3}\,m^{-1/2}\,\cot^{1/2}(\pi/m),$$
where $v_1,v_2$ are two endpoints of $T_b$.
It is straightforward checking that the above expression is increasing with respect to $m$,
and so the minimum value will be attained when $m=6$, equivalently $n=2$
(which is the case of the regular hexagon $H$), with $d_M(T_b,H)$ equal to $2^{-1/2}\,\cot^{1/2}(\pi/6)$.
On the other hand, if $n=1$, $C$ is an equilateral triangle $\mathcal{T}$
and $$d_M(T_b,\mathcal{T})=d(p,x)=a(C)\,\cos^{-1}(\pi/3)=2\,\cot^{1/2}(\pi/3)\,3^{-1/2},$$
where $p$ is the center of symmetry and $x$ is any vertex of $\mathcal{T}$.
Since $d_M(T_b,\mathcal{T})<d_M(T_b,H)$, the result follows.
\end{proof}

\begin{remark}
We point out that Theorem~\ref{th:min} provides a lower bound for the maximum relative
diameter of any trisection of a regular polygon contained in $\mathscr{P}_1$. That is,
for $C\in\mathscr{P}_1$,
$$d_M(T,C)\geq d_M(T_b,\mathcal{T})=2\,\cot^{1/2}(\pi/3)\,3^{-1/2}=0.877383\dots,$$
for any trisection $T$ of $C$.
\end{remark}

In \cite[Th.~5]{mps} it is proved that the minimum value of the maximum relative diameter
for bisections, in the class of centrally symmetric planar convex bodies, is attained by
a particular set delimited by the intersection of a strip and two symmetric circular sectors meeting in a certain angle.
In some sense, this result may suggest that the minimizing set in our setting could have curved edges.
Since Reuleaux triangle is a remarkable set of our class with that property,
we shall proceed to the study of this candidate in detail, showing in the following examples that
it does not actually provide the desired minimum value. %
%

\begin{example}
Consider a Reuleaux triangle $R$ of unit area. This implies that the equilateral triangle
obtained by joining the vertices of $R$ has edges of length $a=(2/(\pi-\sqrt{3}))^{1/2}$.
In view of Theorem~\ref{th:main}, the standard trisection $T_b$ gives the minimum value for $d_M$,
and taking into account Proposition~\ref{prop:borde}, straightforward computations yield that $d_M(T_b,R)=d(v_1,v_2)=(\sqrt{3}-1)\,a=0.872002\dots$, where $v_1,v_2$ are two endpoints of $T_b$.
Notice that $d_M(T_b,R)<d_M(T_b,\mathcal{T})$, where $\mathcal{T}$ is the equilateral triangle of unit area.
However, we shall see in Example~\ref{ex:ht} that the minimum value for
the maximum relative diameter is strictly smaller than $d_M(T_b,R)$.
\end{example}

\begin{example}
\label{ex:ht}

Consider a regular hexagon $H$ with edges of length $h>0$, and the smallest equilateral triangle $T_r(H)$ containing $H$.
Enlarge each edge of $H$ lying on $\ptl T_r(H)$ by adding symmetrically two segments of length $0<\varepsilon\leq h$.
By joining the endpoints of these new edges, we shall obtain a 3-rotationally symmetric convex non-regular hexagon $H_\varepsilon$. Call $a,b$ the lengths of the edges of $H_\varepsilon$, with $a<b$.

\begin{figure}[ht]
  \includegraphics[width=0.26\textwidth]{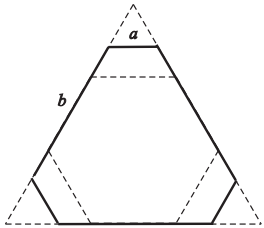}\\
  \caption{A set $H_\varepsilon$ with edges of lengths $a$, $b$}
\end{figure}

By imposing that these sets enclose unit area,
we have that $H_\varepsilon$ evolve continuously, when $\varepsilon$ increases,
from the regular hexagon (with $a=b=2^{1/2}3^{-3/4}$) to the equilateral triangle (with $a=0$ and $b=2\ 3^{-1/4}$).
Morever, taking into account that each $H_\varepsilon$ can be seen as an equilateral triangle with edges of length $b+2a$
where three equilateral triangles with edges of length $a$ have been removed from the corners, the unit area condition satisfied by $H_\varepsilon$ can be expressed analytically in terms of $a,b$, obtaining
\[
b=-2\,a+\sqrt{3^{-1/2}\,4+3\,a^2},
\]
where $a\in[0,2^{1/2}3^{-3/4}]$.

From Theorem~\ref{th:main}, the minimum value for $d_M$
will be given by the standard trisection $T_b(H_\varepsilon)$.
By using Proposition~\ref{prop:minimodm} we have that
$d_M(T_b,H_\varepsilon)$ equals either $d(p,v)$ or $d(v_1,v_2)$, where $p$ is the center of the threefold symmetry,
$x$ is a vertex of $H_\varepsilon$ and $v_1,v_2$ are two endpoints of $T_b$.
In fact, that minimum value will depend on the \emph{proximity} of $H_\varepsilon$
to an equilateral triangle or a regular hexagon.
In this setting, it can be checked that
\begin{align*}
&d(p,x)=\frac{1}{3}\bigg(4\sqrt{3}+18a^2-3a\sqrt{12\sqrt{3}+27a^2}\bigg)^{1/2},
\\
&d(v_1,v_2)=\frac{1}{2}\bigg(3a^2+4/\sqrt{3}\bigg)^{1/2}.
\end{align*}

Figure~\ref{gra} shows the graphs of the above expressions as functions of $a$,
where the decreasing one corresponds to $d(p,x)$.
By the definition of maximum relative diameter functional, we have that $d_M(T_b,H_\varepsilon)$
will be the larger value between $d(p,x)$ and $d(v_1,v_2)$.
As both graphs intersect when $a=a_0=0.141227\dots$, it follows that
$d_M(T_b,H_\varepsilon)$ equals $d(p,x)$ if $a\leq a_0$ (when $H_\varepsilon$ is closer to an equilateral triangle),
and equals $d(v_1,v_2)$ if $a\geq a_0$ (when $H_\varepsilon$ is, in some sense, more similar to a regular hexagon).

\end{example}

\begin{figure}[ht]
    \includegraphics[width=0.6\textwidth]{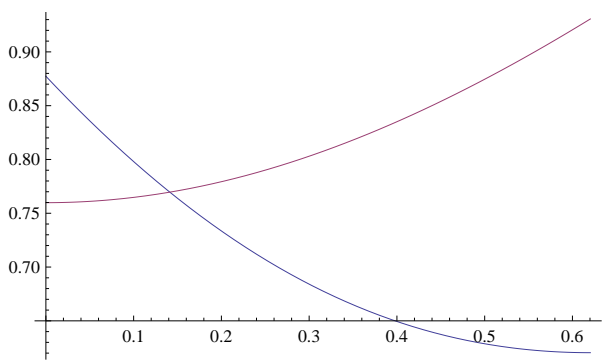}\\
  \caption{The decreasing graph corresponds to $d(p,x)$, and the increasing one
  corresponds to $d(v_1,v_2)$}
  \label{gra}
\end{figure}

In any case, it is clear that there are values of $a$, and thus sets of type $H_\varepsilon$,
such that $d_M(T_b,H_\varepsilon)$ is strictly smaller than $d_M(T_b,R)=0.872002\dots$, and so the minimum
value for the maximum relative diameter is not achieved by Reuleaux triangle $R$.
More precisely, the minimum value for $d_M$ for sets of type $H_\varepsilon$ is equal to $0.769616\dots$,
being attained when $a=a_0$.

\begin{remark}
We point out that each set $H_\varepsilon$ from Example~\ref{ex:ht} coincides with a proper Minkowski sum $\lambda\mathcal{T}+(1-\lambda) H$, where $\mathcal{T}$ is an equilateral triangle, $H$ is a regular hexagon, and $\lambda\in[0,1]$, after scaling in order to enclose unit area.
\end{remark}

The previous example suggests us the following idea, which will lead us to the desired optimal set for our problem.

\begin{example}
\label{ex:optimo}
Consider the set $H_\varepsilon$ with $a=a_0$ and its
associated smallest triangle $T_r(H_\varepsilon)$, being $p$ its center of symmetry.
Set $r:=d(p,x)$, where $x$ is any vertex of $H_\varepsilon$,
and substitute the edges of $H_\varepsilon$ of length $a$ by circular arcs of radius $r$ centered at $p$,
see Figure~\ref{fig:rounding}.
By shrinking appropriately in order to enclose unit area, we obtain a new 3-rotationally symmetric convex body that will be called $\widetilde{H}$.

\begin{figure}[ht]
  \includegraphics[width=0.56\textwidth]{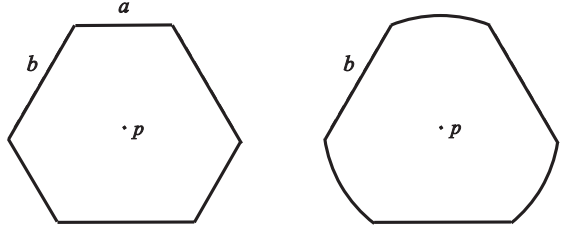}\\
  \caption{$\widetilde{H}$ is obtained by rounding some edges in the optimal set $H_\varepsilon$,
  and shrinking for unit area}
  \label{fig:rounding}
\end{figure}

Notice that, since we have to shrink in order to preserve unit area,
all the distances will decrease and so the maximum relative diameter for $\widetilde{H}$
(which by construction will be given by $d(v_i,v_j)=d(p,x)$, being $x$ any point in the circular arcs)
will be \emph{strictly less} than the corresponding value for the optimal set of type $H_\varepsilon$.
Next result shows that this set $\widetilde{H}$ is the only one attaining the minimum possible value
for the maximum relative diameter in the class of unit area 3-rotationally symmetric planar convex bodies.
\end{example}

\begin{theorem}
\label{th:optimo}
Let $C$ be a 3-rotationally symmetric planar convex body of unit area, and let $\widetilde{H}$ be
the set described in Example~\ref{ex:optimo}. Then,
\[
d_M(T_b,\widetilde{H})\leq d_M(T_b,C),
\]
with equality if and only if $\widetilde{H}=C$.
\end{theorem}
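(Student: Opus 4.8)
The plan is to reduce the optimization over all of $\mathscr{C}_1$ to a two-parameter problem that reproduces the configuration of Example~\ref{ex:optimo}. First I would note that, by Theorem~\ref{th:main}, it suffices to minimize $d_M(T_b,C)$ over $C\in\mathscr{C}_1$, and by Proposition~\ref{prop:borde} this quantity equals either $d(v_1,v_2)$ or $d(p,x)$ for some $x\in\ptl C$, where $v_1,v_2,v_3$ are the endpoints of the standard trisection (the midpoints of the edges of $T_r(C)$) and $p$ is the center of symmetry. The value $d(v_1,v_2)$ is controlled purely by the apothem $a:=a(C)$ of the smallest containing triangle $T_r(C)$: indeed $d(v_1,v_2)=\sqrt{3}\,a$ by the same elementary computation used in Lemma~\ref{le:min-regular}. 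So $d_M(T_b,C)\geq\sqrt{3}\,a(C)$ always, and $d_M(T_b,C)\geq \max\{\sqrt3\,a(C),\,\max_{x\in\ptl C}d(p,x)\}=\max\{\sqrt3\,a(C),\,R(C)\}$, where $R(C)$ denotes the circumradius of $C$ (the largest distance from $p$ to $\ptl C$).

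Next I would fix the value of $a=a(C)$ and ask: among all $C\in\mathscr{C}_1$ with prescribed apothem $a$ of $T_r(C)$, which one minimizes $R(C)$? Since the area is fixed at $1$ and $C\subseteq T_r(C)$, a body with small circumradius is "forced outward" and hence must be large enough; conversely, pushing the boundary outward near the vertices of $T_r(C)$ increases $R(C)$. The extremal configuration for fixed $a$ should be exactly of the form $H_\varepsilon$ from Example~\ref{ex:ht} with its short edges replaced by circular arcs of radius $R$ centered at $p$ — that is, a triangle-with-rounded-corners — because replacing any boundary arc of $C$ lying in the annular region $\{R\geq|z-p|\geq a\}$ by a circular arc of radius $R$ keeps $C$ inside the ball of radius $R$, keeps it inside $T_r(C)$, keeps the apothem equal to $a$ (the arc does not enter $B(C)$), and \emph{maximizes} the enclosed area among all curves staying in that ball; by convexity and the threefold symmetry, maximizing area for fixed $(a,R)$ forces the extremal body. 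Hence, for fixed $a$, the minimal admissible $R=R(a)$ is a decreasing function determined by the unit-area constraint for this rounded shape, and an explicit but routine computation (intersecting the equilateral triangle of apothem $a$ with the disk of radius $R$) gives the relation between $a$, $R$ and the area.

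With both geometric quantities now functions of the single parameter $a$, the quantity to minimize becomes $f(a):=\max\{\sqrt3\,a,\,R(a)\}$ where $R(a)$ is decreasing in $a$ and $\sqrt3\,a$ is increasing; therefore $f$ is minimized exactly where the two branches cross, i.e.\ where $\sqrt3\,a=R(a)$. That crossing point is precisely the set $\widetilde H$ of Example~\ref{ex:optimo}: it is the rounded hexagon for which $d(v_1,v_2)=d(p,x)$, rescaled to unit area. Evaluating, $d_M(T_b,\widetilde H)=\sqrt3\,a(\widetilde H)$, which numerically matches the value $0.769616\dots$ already computed. For uniqueness: if $C\ne\widetilde H$ but $d_M(T_b,C)=d_M(T_b,\widetilde H)$, then from $\sqrt3\,a(C)\le d_M(T_b,\widetilde H)=\sqrt3\,a(\widetilde H)$ we get $a(C)\le a(\widetilde H)$, and from $R(C)\le d_M(T_b,\widetilde H)=R(\widetilde H)$ together with $R(C)\ge R(a(C))\ge R(a(\widetilde H))$ (using that $R(a)$ is strictly decreasing) we get $a(C)\ge a(\widetilde H)$; hence $a(C)=a(\widetilde H)$ and $R(C)=R(a(C))$, so $C$ realizes the area-maximizer for its $(a,R)$, which by the rigidity in the area-maximization step is $\widetilde H$.

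The main obstacle is the area-maximization step: making precise that, among convex 3-rotationally symmetric bodies contained in a ball of radius $R$ and in the equilateral triangle of apothem $a$ (with $R\le 2a$, so that the triangle's inscribed ball is forced), the one of largest area is the triangle-cap-rounded shape, and that this maximizer is unique. I would argue this by a cut-and-fill/symmetrization argument: any portion of $\ptl C$ strictly inside the bounding ball and not on $\ptl T_r(C)$ can be pushed outward (radially, toward the circle of radius $R$, or toward the triangle's edges) without leaving either region, strictly increasing area unless it already coincides with an arc of the circle or a segment of a triangle edge; convexity then pins down the global shape. Care is needed to check that this outward pushing never violates convexity and never shrinks the apothem below $a$ — both follow because the inscribed ball $B(C)$ of radius $a$ is untouched and the new boundary pieces are themselves convex arcs. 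Once this lemma is in hand, the rest is the one-variable calculus sketched above.
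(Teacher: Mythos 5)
Your proposal is correct, and at its core it rests on the same two quantities that drive the paper's proof: the length $d(v_1,v_2)=\sqrt3\,a(C)$ determined by the apothem $a(C)$ of $T_r(C)$, and the maximal distance $R(C)=\max_{x\in\ptl C}d(p,x)$, together with the fact that $\wtilde{H}$ is precisely the unit-area intersection of $T_r(\wtilde H)$ with the disc $\overline{B}(p,r)$ on which these two quantities coincide. The difference is one of packaging: the paper argues by a direct dichotomy on the containing triangles (if $T_r(\wtilde H)\subset T_r(C)$ then $d_M(T_b,C)\ge \sqrt3\,a(C)>\sqrt3\,a(\wtilde H)$; if $T_r(C)\subseteq T_r(\wtilde H)$ then equality of areas forces a point $y\in C\setminus\wtilde H\subseteq T_r(\wtilde H)\setminus\wtilde H$, whence $d(p,y)>r$), and uniqueness falls out of the same containment. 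You instead introduce the function $a\mapsto R(a)$ and run a one-variable min--max of $\max\{\sqrt3\,a,R(a)\}$, which is a legitimate and slightly more systematic route to the same monotonicity facts. One substantive comment: the step you single out as the main obstacle --- that the area maximizer for fixed $(a,R)$ is the triangle--disc intersection --- requires no cut-and-fill or symmetrization argument at all. By definition $C\subseteq T_r(C)\cap\overline{B}(p,R(C))$, so $1=\area(C)\le \area\bigl(T_a\cap\overline{B}(p,R)\bigr)$, which is exactly the inequality $R(C)\ge R(a(C))$; and equality forces $C=T_a\cap\overline{B}(p,R)$, since a closed convex set contained in another of the same area must coincide with it. This observation both closes the only real gap in your sketch (the unverified claim that outward pushing preserves convexity and the apothem) and streamlines your uniqueness argument; it is essentially what the paper's second case does directly. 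Two minor points: the value $0.769616\dots$ you quote is $d_M(T_b,H_\varepsilon)$ at $a=a_0$, whereas the paper computes $d_M(T_b,\wtilde H)=0.769262\dots$ for the rounded set; and, as in the paper, the equality case should be read up to a rigid motion identifying the two centers and the two containing triangles.
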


\begin{proof}
Consider $T_r(\wtilde{H})$ and $T_r(C)$ the smallest equilateral triangles
containing $\wtilde{H}$ and $C$, respectively.
Due to the threefold symmetry, we can assume that both triangles are centered at the same point $p$,
and so the corresponding standard trisections are given by the same lines leaving from $p$.

If $T_r(\wtilde{H})\subset T_r(C)$, then clearly the distance between two endpoints of $T_b(C)$ is strictly larger than
the distance between two endpoints of $T_b(\widetilde{H})$, and so $d_M(T_b,C)>d_M(T_b,\widetilde{H})$.

Assume now the other possibility $T_r(C)\subseteq T_r(\wtilde{H})$,
taking into account that both triangles are equilateral and centered at $p$.
Then $C\subset T_r(C)\subseteq T_r(\wtilde{H})$.
Notice that it is not possible that $C\subset\wtilde{H}$, since both sets enclose unit area.
Hence, we can find a point $y\in\ptl C\subset C$ which is not contained in $\wtilde{H}$ but included in $T_r(C)$ (in fact, such a point will be placed nearby the corners of $T_r(C)$, see Figure~\ref{fig:proof}).
Then $d(p,y)>d(p,x)$, for any $x$ in the circular arcs of $\wtilde{H}$, which implies $d_M(T_b,C)>d_M(T_b,\wtilde{H})$.
\end{proof}

%
%

\vspace{-1mm}
\begin{figure}[ht]
  \includegraphics[width=0.3\textwidth]{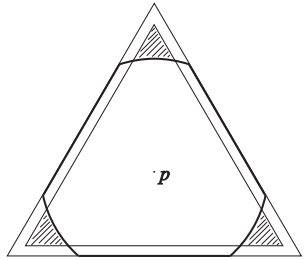}\\
  \caption{$\widetilde{H}$ and the two nested equilateral triangles $T_r(\widetilde{H})$ and $T_r(C)$. The striped regions will necessarily contain points from $C$}
  \label{fig:proof}
\end{figure}

\begin{remark}
\label{re:quotient}
We point out that the question treated in this Section~\ref{se:min}
of finding the minimum possible value for the maximum relative diameter functional
on the class $\mathscr{C}_1$ of unit area 3-rotationally symmetric planar convex bodies,
is equivalent to minimizing the quotient
\begin{equation}
\label{eq:quotient}
\frac{d_M(T_b,C)\,^2}{\area(C)},
\end{equation}
among all 3-rotationally symmetric planar convex bodies.
Notice that \eqref{eq:quotient} is invariant under dilations,
which allows to omit the unit area restriction.
This approach has been used in several works studying analogous problems \cite{cms,css,mps}.
In our setting, Theorem~\ref{th:optimo} assures that
\begin{equation}
\label{eq:bound}
\frac{d_M(T_b,C)\,^2}{\area(C)}\geq \frac{d_M(T_b,\widetilde{H})\,^2}{\area(\widetilde{H})},
\end{equation}
for any 3-rotationally symmetric planar convex body $C$. In fact, straightforward computations will give that $d_M(T_b,\widetilde{H})$ equals $0.769262\dots$, a value slightly less than $d_M(T_b,H_\varepsilon)$ when $a=a_0$,
and so the optimal bound in \eqref{eq:bound} is $0.591764\dots$.
\end{remark}

\begin{remark}
We stress that the optimal set $\widetilde{H}$ can be geometrically constructed by considering the intersection of an equilateral triangle and an appropriate circle with the same center, imposing unit area. This set has already appeared in literature for some optimization problems. More precisely, it is the solution of some complete systems of inequalities for 3-rotationally symmetric planar convex bodies, involving classical geometric magnitudes, see \cite[\S.~4]{hss}.
\end{remark}

\begin{figure}[ht]
  \includegraphics[width=0.3\textwidth]{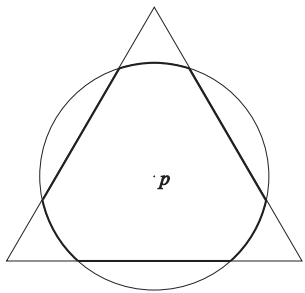}\\
  \caption{The set $\widetilde{H}$ is given by a unit area intersection of an equilateral triangle and a circle}
\end{figure}

\section{General subdivisions}
\label{se:final}

For a 3-rotationally symmetric planar convex body $C$, one can consider our problem of minimizing the maximum relative diameter functional not only for trisections
(given by three curves leaving from an interior point and meeting the boundary of $C$), but for general \emph{partitions}
of $C$ into three connected subsets of equal areas.

\begin{figure}[ht]
  \includegraphics[width=0.68\textwidth]{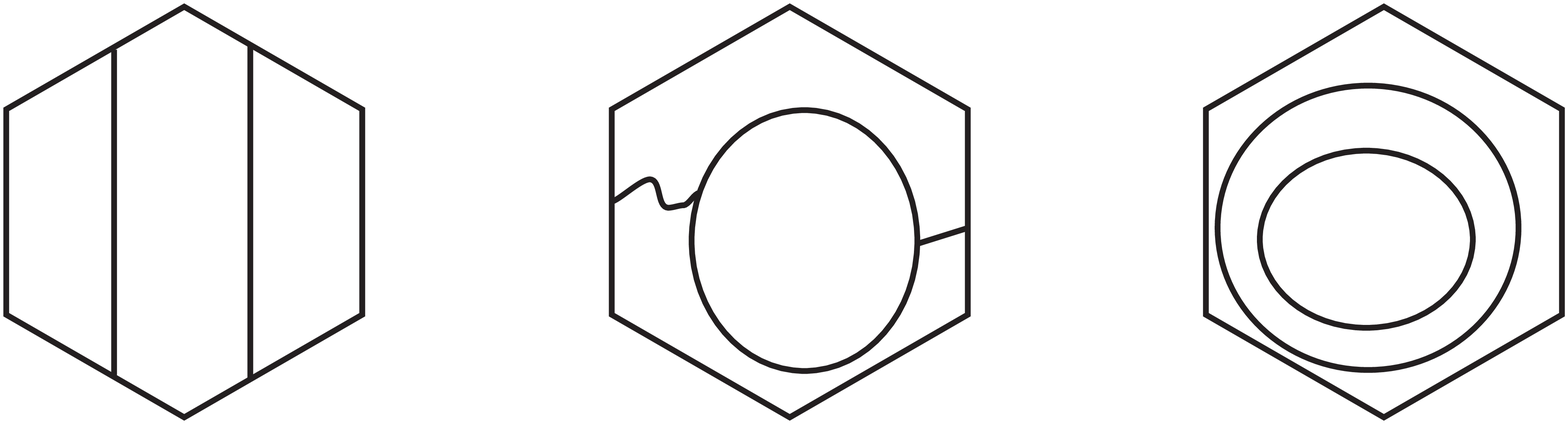}\\
  \caption{Three different partitions of the regular hexagon}
\end{figure}

In this setting, we have the following result stating that the minimum value for the maximum relative diameter
when considering general partitions is given by the corresponding standard trisection.

\begin{proposition}
\label{prop:general}
Given a 3-rotationally symmetric planar convex body $C$,
and a partition $Q$ of $C$ into three connected subsets of equal areas, we have that
$$d_M(T_b,C)\leq d_M(Q,C),$$
where $T_b$ is the standard trisection of $C$.
\end{proposition}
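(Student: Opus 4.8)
The plan is to reduce the case of a general partition $Q$ to the already-settled case of trisections by showing that the lower bounds of Lemmata~\ref{le:primero} and \ref{le:segundo} still apply, and that one of the two values $d(v_1,v_2)$ or $d(p,x)$ from Proposition~\ref{prop:borde} is always a lower bound for $d_M(Q,C)$. Write $Q=\{C_1,C_2,C_3\}$, and recall from Proposition~\ref{prop:borde} that $d_M(T_b,C)$ equals either $d(v_1,v_2)$ (distance between two endpoints of the standard trisection) or $d(p,x)$ for some $x\in\ptl C$ (in the latter case $x$ is a point of $\ptl C$ realizing the apothem-type extremal distance). So it suffices to prove the two inequalities $d_M(Q,C)\geq d(p,x)$ and $d_M(Q,C)\geq d(v_1,v_2)$ for an arbitrary partition into three connected pieces of equal area, exactly mirroring the two lemmata.

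For the first inequality I would argue just as in Lemma~\ref{le:primero}: the center $p$ lies in some $C_i$, say $C_1$; let $x',x''$ be the two threefold-symmetric images of the relevant boundary point $x$, so $d(p,x)=d(p,x')=d(p,x'')$. If any of $x,x',x''$ lies in $C_1$ we are done since $d_M(Q)\geq D(C_1)\geq d(p,x)$; otherwise two of them lie in a common $C_j$, and $d_M(Q)\geq D(C_j)\geq d(x',x'')>d(p,x)$. This uses only that the $C_i$ partition $C$ and nothing about the structure of the interfaces, so it carries over verbatim. The second inequality, $d_M(Q,C)\geq d(v_1,v_2)$, is the delicate one: in Lemma~\ref{le:segundo} the argument exploited the three \emph{endpoints} $w_1,w_2,w_3$ on $\ptl C$ of the trisecting curves, and a general partition need not have such distinguished boundary points. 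The key observation to recover it is that for \emph{any} partition of $C$ into three connected pieces, the trace on $\ptl C$ of (say) $C_1$ is a connected arc (by a standard Jordan-curve / connectedness argument in the plane: the pieces are connected, their common interfaces are curves, so each $\ptl C_i\cap\ptl C$ is an arc, and the boundary circle $\ptl C$ is cut into at most three arcs). Thus each piece meets $\ptl C$ in an arc, and the three arcs partition $\ptl C$, so the analysis of the \emph{relative position} of these arcs against the three endpoints $v_1,v_2,v_3$ of $T_b$ on $\ptl B(C)$ reproduces the alternation dichotomy of Lemma~\ref{le:segundo}: either two of the $v_i$ fall in the closure of the same boundary arc — hence in the same $C_j$, giving $d_M(Q)\geq d(v_i,v_j)=d(v_1,v_2)$ directly — or the arc-endpoints and the $v_i$ interlace, and the same rotation-of-$T_b$ trick (Figure~\ref{fig:rotated}) produces a pair of points in a common piece at distance $\geq d(v_1,v_2)$.

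The main obstacle I anticipate is precisely the step asserting that each piece of a general partition meets $\ptl C$ in a single connected arc. For genuinely arbitrary connected partitions this could conceivably fail (a piece could touch the boundary in several arcs while remaining connected through the interior), so I would either (a) impose, as seems already implicit in the paper's notion of a ``partition into three connected subsets'' obtained by cutting with curves, that the interfaces $\ptl C_i\cap\ptl C_j$ are simple curves — in which case a short topological argument gives the single-arc property — or (b) run the argument directly on the boundary: since $\ptl C\cong\sph^1$ is covered by the three closed sets $C_i\cap\ptl C$, pick one that contains two of $v_1,v_2,v_3$ unless all three are separated, and in the separated case note that between consecutive $v_i$'s on $\ptl C$ there must be a ``switch'' between pieces, forcing the interlacing needed for the rotation argument. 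Either way, once the boundary combinatorics are pinned down the inequalities follow from the two lemmata, and combining them via Proposition~\ref{prop:borde} yields $d_M(Q,C)\geq d_M(T_b,C)$, completing the proof.
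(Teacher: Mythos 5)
Your first half (the bound $d_M(Q,C)\geq d(p,x)$) is fine and is exactly what the paper does: the argument of Lemma~\ref{le:primero} uses nothing about the interfaces, only that $p$ and the three symmetric points $x,x',x''$ are distributed among three connected pieces, so it transfers verbatim.

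The second half has a genuine gap, and you have correctly located it but not repaired it. The claim that each piece of a general partition meets $\ptl C$ in a single connected arc is false: a partition by two disjoint chords produces a middle ``band'' piece whose trace on $\ptl C$ consists of two disjoint arcs (this is the paper's case $q=4$; a degenerate version, with the two cutting curves sharing a boundary point, occurs inside case $q=3$). In these configurations there are four switch points on $\ptl C$ rather than three, so the alternation pattern $v_1,w_1,v_2,w_2,v_3,w_3$ on which the rotation trick of Lemma~\ref{le:segundo} rests is not available, and your fallback (b) does not say how the rotation argument is to be run there. The paper closes exactly these cases with a different tool: it observes that the band piece must contain two \emph{antipodal} boundary points $y,\overline{y}$ (the segment joining them passes through $p$), and the separate Lemma~\ref{le:antipodal} gives $d(y,\overline{y})=d(p,y)+d(p,\overline{y})\geq 2\,d(p,v_1)\geq d(v_1,v_2)$, using that the endpoints $v_i$ realize the minimal distance from $p$ to $\ptl C$. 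The cases you do cover ($q\leq 2$, where two of the $v_i$ lie in one piece, and $q=3$ with $Q$ an honest trisection, where Theorem~\ref{th:main} applies) are handled as in the paper. So your outline is recoverable, but only after replacing the interlacing-plus-rotation step by the antipodal-point estimate (or an equivalent), which is the one genuinely new ingredient the paper's proof needs beyond Lemmata~\ref{le:primero} and \ref{le:segundo}.
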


\begin{proof}
By Proposition~\ref{prop:borde} we know that $d_M(T_b,C)$ equals either $d(p,x)$ or $d(v_1,v_2)$,
where $p$ is the center of symmetry, $x\in\ptl C$, and $v_1,v_2$ are two endpoints of $T_b$.
Call $C_1,C_2,C_3$ the subsets of $C$ given by the partition $Q$.

If $d_M(T_b,C)=d(p,x)$, call $x',x''$ the threefold symmetric points of $x$, and let $C_1$ be a subset containing $p$.
In case that $C_1$ contains $x$ or one of its symmetric points, then $d_M(Q,C)\geq D(C_1)\geq d(p,x)=d_M(T_b,C)$.
Otherwise, if $C_1$ does not contain any of those points, then clearly two of them will be contained
in a same subset, say $C_2$, and so $d_M(Q,C)\geq D(C_2)\geq d(x,x')$. Since $d(x,x')>d(p,x)$,
we conclude that $d_M(Q,C)<d_M(T_b,C)$, as desired.

On the other hand, if $d_M(T_b,C)=d(v_1,v_2)$, let $q$ be the number of intersection points
between $\ptl C$ and the curves of $Q$.
In case that $q$ equals zero or one, it is clear that $\ptl C$ (and in particular the endpoints of $T_b$)
will be contained in one of the subsets $C_j$, and so $d_M(Q,C)\geq D(C_j)\geq d(v_1,v_2)=d_M(T_b,C)$.
If $q=2$, then $\ptl C$ will be contained in the union of two subsets $C_i\cup C_j$,
and so two endpoints of $T_b$ will necessarily lie in one subset, say $v_1,v_2\in C_i$.
Then $d_M(Q,C)\geq D(C_i)\geq d(v_1,v_2)=d_M(T_b,C)$.
If $q=3$, then $Q$ will be either a trisection, and so the statement is true by Theorem~\ref{th:main},
or two curves of $Q$ meet at a same point $y$ of $\ptl C$. In this latter case, a subset $C_i$ will
contain the two antipodal points $y,\overline{y}$ of $\ptl C$
(that is, the segment joining $y$ and $\overline{y}$ passes through the center of symmetry of $C$),
and then $d_M(Q,C)\geq D(C_i)\geq d(y,\overline{y})\geq d(v_1,v_2)=d_M(T_b,C)$, by using Lemma~\ref{le:antipodal}.
If $q=4$, then $Q$ will consist of two curves meeting $\ptl C$.
In this situation, it is easy to check that one of the subsets $C_i$
will contain two antipodal points, and proceeding as in the case $q=3$ it follows that $d_M(Q,C)\geq d_M(T_b,C)$.
Since any partition with more than four intersection points divides $C$ into more than three subsets,
we conclude that the statement holds.
\end{proof}

\begin{remark}
Taking into account that any trisection is, in particular, a partition, Proposition~\ref{prop:general} implies that the minimum value for the maximum relative diameter when considering general partitions is attained by the standard trisection.
\end{remark}

\begin{lemma}
\label{le:antipodal}
Let $C$ be a 3-rotationally symmetric planar convex set, with $p$ its center of symmetry.
Let $y,\overline{y}\in\ptl C$ be two antipodal points (that is, the segment joining $y$ and $\overline{y}$ passes through $p$). Then, $d(y,\overline{y})\geq d(v_1,v_2)$, where $v_1,v_2$ are two endpoints of the standard trisection of $C$.
\end{lemma}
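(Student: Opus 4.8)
The plan is to reduce the statement to the inscribed ball $B(C)$ introduced in Remark~\ref{re:ball}. The first step is to record the exact value of $d(v_1,v_2)$. By the construction in Section~\ref{se:tb}, the endpoints $v_1,v_2$ of the standard trisection are the midpoints of two edges of the smallest equilateral triangle $T_r(C)$, hence they are two vertices of the medial triangle of $T_r(C)$. Therefore $d(v_1,v_2)$ equals half the side length of $T_r(C)$, which is precisely $\sqrt{3}$ times its apothem. Writing $r$ for the radius of $B(C)$ — equivalently, the length of the apothem of $T_r(C)$, and also $r=d(p,\ptl C)$ — we get $d(v_1,v_2)=\sqrt{3}\,r$.

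The second step uses the fact, also stated in Remark~\ref{re:ball}, that there are no points of $\ptl C$ in the interior of $B(C)$; consequently $d(p,z)\geq r$ for every $z\in\ptl C$, and in particular $d(p,y)\geq r$ and $d(p,\overline{y})\geq r$ for the two given antipodal points.

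Finally, since $y$ and $\overline{y}$ are antipodal, the point $p$ lies on the segment $\overline{y\overline{y}}$, so the distances add: $d(y,\overline{y})=d(y,p)+d(p,\overline{y})\geq 2r$. As $2>\sqrt{3}$, this gives $d(y,\overline{y})\geq 2r>\sqrt{3}\,r=d(v_1,v_2)$, which even yields a strict inequality. I do not expect any genuine obstacle here; the only points that need a little care are the elementary identity $d(v_1,v_2)=\sqrt{3}\,r$ (from the medial triangle of $T_r(C)$ together with the relation between side and apothem), and the observation that $p$ lies between $y$ and $\overline{y}$ so that the two sub-distances add up.
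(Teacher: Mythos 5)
Your proof is correct and follows essentially the same route as the paper: both arguments rest on the facts that $d(p,y),d(p,\overline{y})\geq d(p,\ptl C)=r$ and that antipodal distances through $p$ add. The only (harmless) difference is the last step, where the paper simply invokes the triangle inequality $d(p,v_1)+d(p,v_2)\geq d(v_1,v_2)$, while you compute $d(v_1,v_2)=\sqrt{3}\,r$ explicitly and use $2>\sqrt{3}$, which incidentally yields the strict inequality.
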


\begin{proof}
Recall that $d(p,v_i)=d(p,\ptl C)=\min\{d(p,x):x\in\ptl C\}$, $i=1,2,3$, by construction of the standard trisection of $C$.
Then $d(p,y)\geq d(p,v_1)$ and $d(p,\overline{y})\geq d(p,v_1)$, and so
$$d(y,\overline{y})=d(p,y)+d(p,\overline{y})\geq d(p,v_1)+d(p,v_2)\geq d(v_1,v_2),$$
where we have used that $y,\overline{y}$ are antipodal points in the first equality, and the classical
triangular inequality in the last inequality.
\end{proof}

\end{document}